\begin{document}
\title{Polynomial overreproduction by Hermite subdivision operators,
  and $p$--Cauchy numbers}
\author{
{Caroline Moosm\"uller}\thanks{Department of Mathematics, University of California, San Diego, 9500 Gilman Drive, La Jolla, CA 92093, USA. \mbox{\texttt{cmoosmueller@ucsd.edu}}} 
  \and
{Tomas Sauer}\thanks{
Lehrstuhl f\"ur Mathematik mit Schwerpunkt Digitale
  Signalverarbeitung \& FORWISS, Universit\"at Passau,
  Fraunhofer IIS Research Group on Knowledge Based Image
  Processing, Innstr.~43,
  94032 Passau, Germany. \mbox{\texttt{tomas.sauer@uni-passau.de}}}}
\date{}
\maketitle

\begin{abstract}
  We study the case of Hermite subdivision operators satisfying a
  spectral condition of order greater than their size. We show that
  this can be characterized by operator factorizations involving
  Taylor operators and difference factorizations of a rank one vector
  scheme. Giving explicit expressions for the factorization operators,
  we put into evidence that the factorization only depends on the order of the
  spectral condition but not on the polynomials that define it. We further show that the
  derivation of these operators is based on an interplay between Stirling numbers and $p$--Cauchy numbers (or generalized
  Gregory coefficients).
    \par\smallskip\noindent
  {\bf Keywords:} Hermite subdivision schemes; operator factorization; $p$--Cauchy numbers
  \par\smallskip\noindent
  {\bf MSC:} 65D15;  41A58; 11B73 
\end{abstract}


\section{Introduction}
A dyadic stationary subdivision operator $S_a$ acts on a  sequence $c :
\ZZ \to \RR$ by means of the convolution like and hence stationary
operation
$$
c \mapsto S_a c := \sum_{\alpha \in \ZZ} a ( \cdot - 2\alpha ) \, c(\alpha).
$$
Here $a$, the so-called \emph{mask} of the subdivision operator, is a
finitely supported sequence. There are various ways of generalizing
subdivision operators. For example, one can consider several variables, dilation factors greater than $2$ or even expansive dilation
matrices, or vector- or matrix-valued data which requires the mask
to be a finitely supported matrix-valued sequence, cf.\ \cite{cavaretta91}. A \emph{subdivision
scheme} is an iteration of subdivision operators that may even depend
on the level of iteration, where the $n$th iteration is seen as data
defined on the grid $2^{-n} \ZZ$. Since these grids get finer and
finer, there is the concept of a \emph{limit function} of subdivision
schemes, cf. \cite{cavaretta91}.

\emph{Hermite subdivision} is a special case of subdivision operators
with matrix masks acting on
vector data, where the components of these vectors are interpreted as
consecutive derivatives. Such schemes have been considered and
analyzed first in \cite{dyn95,merrien92}.
The chain rule then enforces a subdivision
process of a mildly level-de\-pendent form that consists of a left and
right multiplication by dyadic diagonal matrices. Also the notion of
\emph{convergence} is special for Hermite subdivision schemes: If the
input data is in $\RR^{d+1}$, the limit function is vector-valued of size $d+1$ and
consists of a $C^d$ function and its derivatives up to order $d$.

It is well--known in subdivision theory \cite{cavaretta91,dyn92} that
the regularity of a limit function implies the preservation of certain
polynomials by the subdivision scheme. For Hermite
subdivision schemes this is usually formulated in terms of the \emph{spectral
  condition} and has been related to Taylor polynomials in
\cite{dubuc09}. In
\cite{merrienSauer12:_from_hermit} it is shown that the spectral condition is essentially
equivalent to an operator factorization of the form
\begin{equation}
  \label{eq:TSASBT}
  T S_{\Ab} = S_{\Bb} T  
\end{equation}
where $T$ is the so--called \emph{Taylor operator}. $T$ is a discrete version
of the Taylor formula and relates successive entries of vector-valued data
in accordance with the assumption that they are consecutive
derivatives. Moreover, the contractivity of $S_{\Bb}$ plays an important role in the analysis of convergence, cf. \cite{merrienSauer12:_from_hermit}.

In \cite{MerrienSauer18S} it is conjectured that convergence implies a generalized spectral condition of order at least $d$ to be satisfied. This is in accordance with similar results for scalar subdivision schemes, cf.\ \cite{cavaretta91}. Therefore, if one is interested in Hermite schemes of regularity $n > d$, that is, limit functions consisting of a $C^n$ function and its first $d$ derivatives, the Hermite scheme should satisfy a spectral condition of order at least $n$. Schemes of regularity $n>d$ are considered in e.g.\ \cite{conti14,jeong17,moosmueller18}.

We call this phenomenon \emph{polynomial overreproduction} and it is the main topic of this paper. We describe conditions under which the subdivision operator $S_{\Ab}$ satisfies a spectral
condition of degree \emph{higher} than $d$, providing a generalization of \cite{moosmueller18b}. It turns out that
this property fits well into the existing theory: $S_\Ab$ has
to have a factorization by means of a Taylor operator as in
\eqref{eq:TSASBT} and the \emph{rank one} vector subdivision
scheme $S_\Bb$ has 
to be factorizable in the sense defined in
\cite{MicchelliSauer97a,micchelli98}.
There is, however, a peculiarity:
The matrices that appear in the factorizations of rank one schemes
are derived from the spectral condition, but do not depend on the concrete choice of $\Ab$.

The paper is organized as follows. We start by introducing notation and give
detailed definitions of the above properties in
Section~\ref{sec:Notation}; factorizations of subdivision operators
are revised in Section~\ref{sec:factorization}. In
Section~\ref{sec:Stirling} we introduce Stirling numbers and their connection to $p$--Cauchy numbers.
Based on the technical preliminaries of Section~\ref{sec:aux},
the main result of the paper, namely the factorization with respect to
the \emph{augmented Taylor operator}, is given in
Section~\ref{sec:augmented} with a rather short proof.

\section{Notation and subdivision schemes}
\label{sec:Notation}
Throughout this paper, $d$ denotes an integer, and $d\geq 1$. Vectors
in $\RR^{d+1}$ are written as $\cb$, that is, with boldface lowercase
letters, while matrices $\Ab$ are written with boldface uppercase
letters. The standard basis in $\RR^{d+1}$ is denoted by
$\eb_0,\ldots,\eb_d$. The identity matrix of dimension $d+1 
$ is denoted by $\Ib_{d+1}$. We also use the Matlab-like notation
$\cb_{k:\ell}$ to extract subvectors. 
Furthermore, for a vector $\cb \in \RR^{d+1}$ we introduce the notation
$
    \hat{\cb} = \left(
      \cb_0,
      \ldots,
      \cb_{d-1},
      0
   \right)^T
$
for the canonical embedding of $\cb$ into $\RR^{d+1+k}$, $k \ge 1$.

The space of all polynomials in one variable is written as $\Pi$, while $\Pi_n$ denotes all such polynomials with degree at most $n$.

By $\ell^{d+1}(\ZZ)$ we denote the space of all sequences $\cb: \ZZ \to \RR^{d+1}$, while $\ell^{(d+1)\times (d+1)}(\ZZ)$ is the space of matrix-valued sequences $\Ab: \ZZ \to \RR^{(d+1)\times (d+1)}$. We use the same notation for vectors (matrices) and sequences of vectors (matrices); it will be clear from the context what is meant.
The notation $\ell^{d+1}_{00}(\ZZ)$ and $\ell^{(d+1)\times
  (d+1)}_{00}(\ZZ)$ is used to denote sequences with finite support.

To distinguish them from input data for subdivision schemes, we denote
sequences of vector valued parameters by $\cb_n, n \in \NN$, in
accordance with the notation $\eb_0,\dots,\eb_d$ of the unit coordinate vectors.
The
$k-$th entry of an element of such a sequence is accessed by
$\cb_{n,k}, k=0,\ldots,d, n\in \NN$.

The \emph{forward difference operator} $\Delta$ is used both in the context of functions and sequences. If $f$ is a function, then 
$(\Delta f)(x) = f(x+1)-f(x), x\in \RR$. For  $\cb \in \ell^{d+1}(\ZZ)$ we have $(\Delta \cb)(\alpha)=\cb(\alpha+1)-\cb(\alpha), \alpha \in \ZZ$. Higher order forward difference operators are defined by $\Delta^n=\Delta(\Delta^{n-1})$, $n\geq 1$, with $\Delta^0=\operatorname{id}$.

A \emph{stationary subdivision operator} with \emph{mask} $\Ab \in \ell^{(d+1)\times (d+1)}_{00}(\ZZ)$ is a map $S_{\Ab}:\ell^{d+1}(\ZZ) \to \ell^{d+1}(\ZZ)$ defined by
\begin{equation*}
    \left(S_{\Ab}\cb\right)(\alpha)=\sum_{\beta \in \ZZ}\Ab(\alpha-2\beta)\cb(\beta), \quad \alpha \in \ZZ, \quad \cb \in \ell^{d+1}(\ZZ).
\end{equation*}
We consider a vector $\cb \in \RR^{d+1}$ as a constant sequence, so that $S_{\Ab}\cb$ means the application of $S_{\Ab}$ to the constant sequence $\cb(\alpha) = \cb, \alpha \in \ZZ$.

A level-dependent \emph{subdivision scheme} $(S_{\Ab^{[n]}},n\in \NN)$ is the procedure of iteratively constructing vector-valued sequences by
\begin{equation}\label{eq:sds}
\cb^{[n+1]}=S_{\Ab^{[n]}}\cb^{[n]},\quad n\in \NN,
\end{equation}
from initial data $\cb^{[0]} \in \ell^{d+1}(\ZZ)$.
In this paper we consider two cases of such subdivision schemes based
on stationary subdivision operators:
\emph{vector subdivision schemes} which use the same mask in every
iteration level, i.e.\ $\Ab^{[n]}=\Ab, n \in \NN$,
cf. \cite{micchelli98}, and
\emph{Hermite subdivision schemes} which use the mildly level-dependent masks
\begin{equation}\label{eq:stationary_mask}
\Ab^{[n]}=\Db^{-n-1}\,\Ab\,\Db^{n}
\end{equation}
where $\Db=\operatorname{diag}\left(1, 2^{-1},\ldots,2^{-d} \right)$ and $\Ab \in \ell^{(d+1) \times (d+1)}_{00} (\ZZ)$ is fixed. In Hermite subdivision, the data $c^{[n]}$ represents function and consecutive derivative values at $2^{-n}\alpha,\alpha\in \ZZ$, leading to the mask \eqref{eq:stationary_mask} via the chain rule.

For $p \in \Pi$ we define the vector-valued function
\begin{equation}\label{eq:spectral_polynomial}
\vb(p)(x) := \left( p^{(k)}(x) : k = 0,\dots,d \right)^T, \quad x\in \RR.
\end{equation}
We also consider $\vb(p)$ as a sequence in $\ell^{d+1}(\ZZ)$, by evaluating at integers only. The particular meaning of $\vb(p)$ will be clear from the context.

A Hermite subdivision scheme is said to satisfy the \emph{spectral condition of order $n\geq d$} if there exist $p_k \in \Pi_k$, normalized as $p_k (x) =
\frac1{k!} x^k + \cdots$, such that
\begin{equation}
  \label{eq:SpecCond}
  S_\Ab \vb (p_k) = 2^{-k} \vb (p_k), \qquad k=0,\dots,n.
\end{equation}
The spectral condition for $n=d$ has first been introduced by
\cite{dubuc09}, see also \cite{merrienSauer12:_from_hermit}. The case
$n > d$ is a higher order spectral condition studied in
\cite{conti14}, and we denote it by \emph{polynomial overreproduction}. The recent paper \cite{MerrienSauer18S} introduces
\emph{spectral chains}, which generalize \eqref{eq:SpecCond}. We
briefly discuss spectral chains in Section \ref{sec:augmented}.

While the spectral condition of order $d$ is important for factorization of Hermite subdivision operators \cite{merrienSauer12:_from_hermit}, it has been shown that it is not necessary for convergence \cite{merrienSauer17:_exten_hermit,MerrienSauer18S}.

\section{Factorization of subdivision operators}\label{sec:factorization}
The factorization of subdivision operators is a standard method for
proving convergence of the associated subdivision schemes and
regularity of their limits.
More precisely, convergence of subdivision schemes can often be
characterized by a factorization and \emph{contractivity} of the
factor scheme while regularity of the limit functions is described by
a factorization and \emph{convergence} of the factor scheme. It is
important, however, to emphasize that the nature of the factorization
has to be adapted to the nature of the subdivision scheme. In
particular, although vector and Hermite subdivision schemes both act
by means of matrix masks on vector valued data, the associated
factorizations are of a siginificantly different nature that reflects
the different conceptual nature of the schemes.

In this paper, we are concerned with
factorizations of rank $1$ vector schemes  as derived in 
\cite{charina05,MicchelliSauer97a,micchelli98,sauer02} and Taylor
factorizations of Hermite schemes
\cite{conti16,merrienSauer12:_from_hermit,MerrienSauer18S}. We now
introduce these concepts.

Following \cite{micchelli98}, for a subdivision operator $S_{\Bb}$, we define
\begin{equation}
  \cE_{\Bb} = \left\{\cb \in \RR^{d+1}: S_{\Bb}\cb = \cb \right\},
\end{equation}
which is the eigenspace (of constant sequences) of $S_{\Bb}$ with
respect to the eigenvalue $1$. The dimension $\dim \cE_\Bb$ is called
the \emph{rank} of the subdivision scheme. In this paper we are only
concerned with \emph{rank $1$ schemes}, i.e.\ operators $S_{\Bb}$
satisfying $\dim\, \cE_{\Bb}=1$, cf.\ \cite{MicchelliSauer97a}.
We call a matrix $\Vb = \left(v_0, \ldots, v_d\right)$ with $v_j \in \RR^{d+1}, j=0,\ldots,d$, an $\cE_{\Bb}$-\emph{generator} if $\left\{v_0, \ldots, v_d\right\}$ is a basis of $\RR^{d+1}$ and if $v_d$ spans $\cE_{\Bb}$.

With the operator
\begin{equation}
D :=
\begin{pmatrix}
  \Ib_d & \\ & \Delta
\end{pmatrix}
\end{equation}
the following result has been shown, cf. \cite{MicchelliSauer97a,micchelli98}:
\begin{lemma}\label{lem:rank1_factorization}
Let $S_{\Bb}$ be a subdivision operator with $\dim\,\cE_{\Bb}=1$. If $\Vb$ is an $\cE_{\Bb}$-generator, then there exists a subdivision operator $S_{\Cb}$ such that
\begin{equation*}
    D\Vb^{-1}\,S_{\Bb}=S_{\Cb}\,D\Vb^{-1}.
\end{equation*}
Furthermore, $\dim\,\cE_{\Cb}=1$.
\end{lemma}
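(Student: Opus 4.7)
The plan is to reduce to the case $v_d = \eb_d$ by a similarity transformation, construct $\Cb$ explicitly via symbols, and then verify the rank claim. Set $\hat\Bb(\alpha) := \Vb^{-1}\Bb(\alpha)\Vb$. Since $\Vb$ is a constant matrix, pulling it through the convolutional definition of $S_\Bb$ gives $\Vb^{-1} S_\Bb \cb = S_{\hat\Bb}\Vb^{-1}\cb$ for every sequence $\cb$, and the similarity carries $\cE_\Bb$ to $\cE_{\hat\Bb} = \RR\Vb^{-1}v_d = \RR\eb_d$, which is again one-dimensional. It therefore suffices to construct $\Cb$ with $DS_{\hat\Bb} = S_\Cb D$ and $\dim \cE_\Cb = 1$.

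For the construction I pass to matrix symbols $\hat\Bb^*(z) := \sum_\alpha \hat\Bb(\alpha) z^\alpha$, under which $D$ acts by left multiplication by $\Delta_z := \operatorname{diag}(1,\ldots,1,z^{-1}-1)$. The desired factorization becomes the symbolic identity $\Delta_z \hat\Bb^*(z) = \Cb^*(z)\Delta_{z^2}$, which I solve formally as $\Cb^*(z) := \Delta_z \hat\Bb^*(z)\Delta_{z^2}^{-1}$. To confirm that $\Cb$ is finitely supported I check that the last column of $\Delta_z\hat\Bb^*(z)$ is divisible entrywise by $z^{-2}-1 = (z^{-1}-1)(z^{-1}+1)$. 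Writing the last column of $\hat\Bb^*(z)$ as $(q(z)^T, s(z))^T$ with $q$ taking values in $\RR^d$ and $s$ scalar, the eigen-identity $S_{\hat\Bb}\eb_d = \eb_d$ is equivalent to $q(\pm 1) = 0$ together with $s(1) = 2$ and $s(-1) = 0$. Consequently $(1-z^2)\mid q(z)$ entrywise and $(1+z)\mid s(z)$, and multiplication by the last diagonal entry $z^{-1}-1$ of $\Delta_z$ produces exactly the required divisibility by $(1-z^{-1})(1+z^{-1})$.

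Finally, for $\dim \cE_\Cb = 1$, I evaluate the symbol identity at $z = \pm 1$ to get $\Cb^*(1)\Delta_1 = \Delta_1\hat\Bb^*(1)$ and $\Cb^*(-1)\Delta_1 = \Delta_{-1}\hat\Bb^*(-1)$, with $\Delta_1 = \operatorname{diag}(1,\ldots,1,0)$ and $\Delta_{-1} = \operatorname{diag}(1,\ldots,1,-2)$. A short computation with the explicit quotients $q(z)/(z^{-2}-1)$ and $s(z)/(z^{-1}+1)$ shows that the $(d,d)$-entry of $\Cb^*(1)$ equals $s(1)/2 = 1 \ne 2$, so the last component of every $\fb \in \cE_\Cb$ must vanish. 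The remaining equations on the first $d$ components of $\fb$ form a linear system governed by the $d \times d$ block of $\hat\Bb^*(\pm 1)$ together with the leftmost $d$ entries of the bottom row of $\hat\Bb^*(-1)$. The main obstacle is to verify that this reduced system has a one-dimensional solution space; I would do so by exhibiting an explicit generator obtained from a vector in $\ker\hat\Bb^*(-1)$ that projects nontrivially onto $\RR^d \oplus \{0\}$, mirroring the argument in \cite{MicchelliSauer97a,micchelli98}.
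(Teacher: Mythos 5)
The paper gives no proof of this lemma at all---it is imported verbatim from Micchelli--Sauer---so the only thing to assess is the internal correctness of your argument. The construction of $\Cb$ is correct and is the standard symbol-calculus route: the conjugation $\hat\Bb=\Vb^{-1}\Bb\Vb$ reducing to $\cE_{\hat\Bb}=\RR\eb_d$, the identity $\Delta_z\hat\Bb^*(z)=\Cb^*(z)\Delta_{z^2}$, and the divisibility of the last column of $\Delta_z\hat\Bb^*(z)$ by $z^{-2}-1$, which you correctly deduce from $q(\pm1)=0$, $s(1)=2$, $s(-1)=0$. Up to that point the argument is complete and sound.

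The genuine gap is the clause $\dim\cE_\Cb=1$, which you explicitly defer (``I would do so by exhibiting an explicit generator\dots''). This cannot be left as a plan, because with the definitions as stated in this paper the reduced system you describe need not have a one-dimensional solution space, so the generator you propose to exhibit does not exist in general. Concretely, take $d=1$, $\Bb^*(z)=\operatorname{diag}\bigl(\tfrac{3}{2}(1+z),\,1+z\bigr)$ and $\Vb=\Ib_2$: then $\cE_\Bb=\RR\eb_1$, so all hypotheses hold, yet the unique $\Cb$ with $D S_\Bb=S_\Cb D$ has symbol $\Cb^*(z)=\operatorname{diag}\bigl(\tfrac{3}{2}(1+z),\,z\bigr)$, and $\Cb^*(1)=\operatorname{diag}(3,1)$ forces $\cE_\Cb=\{0\}$. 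Your own computation exposes the mechanism: the $(d,d)$ entry of $\Cb^*(1)$ is $s(1)/2=1$ rather than $2$ (the eigenvalue in the last component is halved by the differencing), while the conditions on the first $d$ components involve the upper-left $d\times d$ block and the bottom row of $\hat\Bb^*(\pm1)$, about which the single hypothesis $\dim\cE_\Bb=1$ says nothing sufficient. Any correct proof of the rank statement must therefore invoke structure beyond what the lemma records---note that in the paper's own application (Theorem~\ref{thm:factorization}) the relevant eigenspace is $\cE_{2\Bb_j}$ and its generator is produced from the spectral condition, not from the factorization. You should either locate and import the precise hypotheses under which Micchelli--Sauer prove the rank claim, or restrict your write-up to what your argument actually establishes: existence of $\Cb$ and the inclusion of $\cE_\Cb$ in $\RR^d\times\{0\}$.
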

From \cite{merrienSauer12:_from_hermit} recall the
\emph{(incomplete) Taylor operator} 
\begin{equation*}
T_d =
\begin{pmatrix}
  \Delta & -1 & -\frac12 & \dots & -\frac{1}{d!} \\
  & \ddots & \ddots & \ddots & \vdots \\
  & & \Delta & -1 & -\frac{1}{2!} \\
  & & & \Delta & -1 \\
  & & & & 1 \\
\end{pmatrix}
\end{equation*}
and the \emph{complete Taylor operator}
\begin{equation*}
    \widetilde T_d =
D \, T_d = \begin{pmatrix}
  \Delta & -1 & -\frac12 & \dots & -\frac{1}{d!} \\
  & \ddots & \ddots & \ddots & \vdots \\
  & & \Delta & -1 & -\frac{1}{2!} \\
  & & & \Delta & -1 \\
  & & & & \Delta \\
\end{pmatrix}.
\end{equation*}
We also consider the following operator which has been defined and studied in \cite{dubuc09}:
$$
 T_d' = \begin{pmatrix}
  \Delta & -1 & \dots & -\frac{1}{(d-1)!}&0 \\
  & \ddots  & \ddots & \vdots & \vdots\\
  & &  \Delta & -1 & 0 \\
  & &  & \Delta & 0 \\
  & &  & & 1 \\
\end{pmatrix}.
$$
We furthermore define $\widetilde{T}_0=\Delta$ and $T_0=T_0'=\operatorname{id}$.
Generalizations of these Taylor operators have been introduced in
\cite{MerrienSauer18S}; we discuss them in Section \ref{sec:augmented}.


It has been shown in \cite[Theorem 4]{merrienSauer12:_from_hermit} that a subdivision operator $S_{\Ab}$ satisfying the
spectral condition of order $d$ \eqref{eq:SpecCond} can be factorized with respect to the Taylor operator: There exists a subdivision operator $S_{\Bb}$ such that
\begin{equation}\label{eq:Taylor_factorization}
T_d S_\Ab = 2^{-d} S_\Bb T_d.
\end{equation}
If $S_{\Ab}$ factorizes as in \eqref{eq:Taylor_factorization}, but stepwise, i.e.\  with respect to operators 
\begin{equation*}
\begin{pmatrix}
T_k & \\
& \Ib_{d-k}
\end{pmatrix}, \quad k = 0,\ldots, d,
\end{equation*}
then this
is even a characterization of the spectral condition of order $d$ \eqref{eq:SpecCond},
cf. \cite[Corollary 2.12]{merrienSauer17:_exten_hermit}.
Furthermore, $\cE_{\Bb}$ is spanned by $\eb_d$. Therefore $\Vb = \Ib_{d+1}$ is an $\cE_{\Bb}$-generator and by Lemma \ref{lem:rank1_factorization} there exists a subdivision operator $S_{\Cb}$ such that
\begin{equation*}
    D\, S_{\Bb} = S_{\Cb}\,D.
\end{equation*}
The latter implies
\begin{equation*}
  \widetilde T_d S_\Ab
   =  D \, T_d S_\Ab = 2^{-d} D S_\Bb T_d = 2^{-d} S_{\Cb} D T_d 
   =  2^{-d} S_{\Cb} \widetilde T_d,  
\end{equation*}
which is the complete Taylor factorization of \cite[Theorem 4]{merrienSauer12:_from_hermit}:
\begin{equation}\label{eq:level0}
  \widetilde T_d S_\Ab = 2^{-d} \, S_{\Cb} \widetilde T_d.
\end{equation}
In this paper we prove a generalization of \eqref{eq:level0} to operators $S_{\Ab}$ which satisfy the spectral condition \eqref{eq:SpecCond} for $n>d$ (Theorem \ref{thm:factorization}). In particular we prove that every such operator factorizes with respect to the \emph{augmented Taylor operator} of order $n$:
\begin{definition}[Augmented Taylor operators] \label{def:augmented}
For $d\geq 1$ and $n\geq d$ we define the \emph{augmented Taylor operator} of order $n$ by
\begin{equation*}
\widetilde T_d^n :=
  \begin{pmatrix}
    \widetilde{T}_{d-1} & -\displaystyle{\sum_{k=0}^{n-d}} G_k^{d:1} \Delta^{k} \\
    & \Delta^{n+1-d} \\
  \end{pmatrix}
  =
 \begin{pmatrix}
  \Delta & -1 & -\frac12 & \dots & -\frac{1}{(d-1)!} &
  -\displaystyle{\sum_{k=0}^{n-d}} G_k^d\Delta^k\\
  & \ddots & \ddots &  & \vdots & \vdots\\
  & & \ddots & \ddots & \vdots  & \vdots\\
  & & & \Delta & -1 & -\displaystyle{\sum_{k=0}^{n-d}} G_k^2\Delta^k\\[0.2cm]
  & & & & \Delta & -\displaystyle{\sum_{k=0}^{n-d}} G_k^1\Delta^k\\[0.2cm]
  & & & &  & \Delta^{n+1-d}
\end{pmatrix},
\end{equation*}
where $G_k^{d:1}=\left(G_k^d,G_k^{d-1},\ldots,G_k^1 \right)^T$, and
$G_k^{\ell}, k \geq 0, \ell \geq 1$ are the \emph{coefficients for
  repeated integration with forward differences} \cite{salzer47}. 
\end{definition}

\begin{remark}
  Normalizing the coefficients $G_k^n$ as in
  \eqref{eq:p_Cauchy} leads to the \emph{$p$--Cauchy numbers of the
    first kind} , see \cite{Rahmani16}. Since $G_k^1$ 
  are known, among others, as \emph{Gregory coefficients},
  cf. \cite{blagouchine16}, one could call these numbers
  \emph{generalized Gregory coefficients}. We discuss them in more
  detail in Section \ref{sec:Stirling}.   
\end{remark}

The existence of such a factorization follows from combining
the Taylor factorization \eqref{eq:Taylor_factorization} of \cite{merrienSauer12:_from_hermit} with iterated factorizations for rank $1$ schemes (Lemma \ref{lem:rank1_factorization}) of
\cite{MicchelliSauer97a,micchelli98}. The contribution of this paper is the explicit computation of the augmented Taylor operators via computing $\cE_{\Bb_j}$ for every iteration $j=d,\ldots,n$ of rank $1$ factorizations. In particular, we show that the spectral condition \eqref{eq:SpecCond}, but \emph{not} the choice of spectral polynomials, already determines all $\cE_{\Bb_j}, j=d,\ldots,n$. We thus also extend the results of \cite{moosmueller18b}.

\section{Stirling and $p$--Cauchy numbers}
\label{sec:Stirling}
Following \cite{graham94}, we recall the definition of Stirling numbers.

The \emph{Stirling numbers of the first kind}, denoted by $\stiri{n}{m}$, count the numbers of ways to arrange $n$ elements into $m$ cycles. From the initial conditions
\begin{equation*}
\stiri{0}{0}=1, \quad \stiri{n}{0}=\stiri{0}{n}=0, \quad n\geq 1,
\end{equation*}
they can be computed via the following recurrence relation:
\begin{equation*}
\stiri{n+1}{m}=n\stiri{n}{m}+\stiri{n}{m-1}, \quad m\geq 1.
\end{equation*}
The \emph{signed Stirling numbers of the first kind} are defined by
\begin{equation}\label{def:signed_stir1}
    s(n,m) = (-1)^{n-m}\stiri{n}{m}.
\end{equation}
They satisfy the recurrence relation
\begin{equation}\label{signed_stir1_recur}
    s(n+1,m)=s(n,m-1)-n\, s(n,m),
\end{equation}
with initial conditions
\begin{equation*}
    s(n,n)=1, \quad s(n,m)=0 \quad \text{ if } m<n \text{ or } n<m.
\end{equation*}
The \emph{Stirling numbers of the second kind}, denoted by $\stir{n}{m}$, count the number of ways to split a set of $n$ elements into $m$ non-empty subsets.
They satisfy the following recurrence relation
\begin{equation}\label{stir2_rec}
    \stir{n+1}{m} = m \, \stir{n}{m}+\stir{n}{m-1}, \quad m \geq 1.
\end{equation}
with initial conditions
\begin{equation*}
    \stir{0}{0}=1, \quad \stir{n}{0}=\stir{0}{n}=0, \quad n \geq 1.
\end{equation*}
The Stirling numbers of the second kind can be computed using Binomial coefficients
\begin{equation*}
\stir{n}{m}=\frac{1}{m!}\sum_{j=0}^m {m \choose j} (-1)^{m-j}j^n.
\end{equation*}
We also need the following relation between the Stirling numbers of the second kind and the Binomial coefficients (see \cite[Eq. 6.15]{graham94}):
\begin{equation}\label{eq:Stir_Bino}
    \stir{n+1}{m+1}=\sum_{k=m}^n {n \choose k}\stir{k}{m}.
\end{equation}
Following \cite{salzer47}, we define the \emph{coefficients for repeated integration with forward differences}, $G_n^k$ for $k,n\geq 1$, by
\begin{equation}\label{eq:coeff_int1}
    G_n^{1}=\frac{1}{n!}\,\int_{0}^1 x(x-1)\cdots (x-n+1)
    dx, \quad n \geq 1,
\end{equation}
and
\begin{equation}\label{eq:coeff_int}
    G_n^{k}=\frac{1}{n!}\,\int_{0}^1\int_0^{x_2} \cdots \int_0^{x_k} x(x-1)\cdots (x-n+1)\,
    dx dx_k \cdots dx_2, \quad n \geq 1, k\geq 2.
\end{equation}
We also define
\begin{equation}\label{eq:coeff_0}
    G^k_0=\frac{1}{k!}, \quad k\geq 1.
\end{equation}
The coefficients $G_n^k$ are connected to the \emph{$p$--Cauchy numbers of the first kind}, $\cC_{n,p}$, defined in \cite{Rahmani16}, via
\begin{equation}\label{eq:p_Cauchy}
   \cC_{n,p-1} = n!\,p! \,G_n^p.
\end{equation}
The sequence $G_n^1$ are the \emph{Gregory coefficients}, since \eqref{eq:coeff_int1} is their well-known integral representation, see e.g.\ \cite{merlini06}. The Gregory coefficients are a well-studied sequence in number theory and are also known as the \emph{Cauchy numbers of the first kind}, the \emph{Bernoulli numbers of the second kind} and the \emph{reciprocal logarithmic numbers}, see e.g.\ \cite{blagouchine17,kowalenko10,merlini06}. 
In this sense, the coefficients in \eqref{eq:coeff_int} are a generalization of the Gregory coefficients. Another generalization of the Gregory coefficients can be found in \cite[Eq.\ (63)]{blagouchine18}.

In \cite{salzer47}, the following recursion is shown to hold:
\begin{equation}\label{eq:coeff_rec}
    G_n^k=\frac{1}{1-k}\left((n-1)G_n^{k-1}+(n+1)G_{n+1}^{k-1}\right), \quad k\geq 2, n\geq 1,
\end{equation}
compare also to the equivalent recursion for $p$--Cauchy numbers in \cite[Theorem 2.5]{Rahmani16}.
Via \eqref{eq:p_Cauchy}, Corollary 2.3 \& Theorem 2.2 of \cite{Rahmani16} imply
\begin{equation}\label{lem:stir2_coeff}
    \sum_{r=1}^{j}\stir{j}{r}\,r!\,G_r^k=\frac{1}{(j+1)\cdots(j+k)}, \quad j,k\geq 1
\end{equation}
and
\begin{equation}\label{eq:stir1_coeff}
    G_n^k=\frac{1}{n!}\sum_{j=1}^n \frac{s(n,j)}{(j+1)\cdots (j+k)}, \quad j,k\geq 1.
\end{equation}
For $k=1$, \eqref{lem:stir2_coeff} and \eqref{eq:stir1_coeff} are proved in \cite{merlini06}.
\begin{remark}
  The case $k=2$ of \eqref{eq:stir1_coeff} can also be found on oeis.org (sequence A002687 resp.\ A002688) under ``formula''.
\end{remark}

\section{Auxiliary results}\label{sec:aux}
We start by proving that the Stirling numbers of the second kind relate forward differences to derivatives:
\begin{lemma}\label{lem:high_forward_diff}
For $p \in \Pi_n, \ell \leq n,\, 1\leq k\leq n-\ell$ we have
\begin{equation*}
\frac{1}{k!}\,\Delta^{k}p^{(\ell)}=\sum_{m=k}^{n-\ell} \frac{1}{m!}\stir{m}{k}p^{(m+\ell)}.
\end{equation*}
\end{lemma}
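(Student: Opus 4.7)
The plan is to prove the identity by a direct computation: expand $\Delta^k$ via the binomial formula, expand each shifted polynomial $p^{(\ell)}(x+j)$ by Taylor's theorem (which terminates because $p \in \Pi_n$ forces $p^{(\ell)} \in \Pi_{n-\ell}$), interchange the two resulting finite sums, and recognize the inner sum as $k! \stir{m}{k}$ via the explicit formula
\begin{equation*}
  \stir{m}{k} = \frac{1}{k!} \sum_{j=0}^k \binom{k}{j} (-1)^{k-j} j^m
\end{equation*}
already recorded in Section~\ref{sec:Stirling}.

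Concretely, the first step writes
\begin{equation*}
  \Delta^k p^{(\ell)}(x) = \sum_{j=0}^k \binom{k}{j}(-1)^{k-j} p^{(\ell)}(x+j).
\end{equation*}
Since $p^{(\ell)} \in \Pi_{n-\ell}$, Taylor expansion gives the finite sum
\begin{equation*}
  p^{(\ell)}(x+j) = \sum_{m=0}^{n-\ell} \frac{j^m}{m!}\, p^{(m+\ell)}(x).
\end{equation*}
Plugging this in, interchanging the (finite) sums, and applying the Stirling formula to the coefficient of $p^{(m+\ell)}(x)$ yields
\begin{equation*}
  \Delta^k p^{(\ell)}(x) = \sum_{m=0}^{n-\ell} \frac{k!}{m!} \stir{m}{k} p^{(m+\ell)}(x).
\end{equation*}
Dividing by $k!$ gives the claimed equality. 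The summation range collapses to $k \le m \le n-\ell$ since $\stir{m}{k}=0$ for $m<k$, and the hypothesis $k \le n-\ell$ ensures this range is nonempty.

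There is no real obstacle here; the only point to watch is that both sums truly are finite so that the interchange is unambiguous, and that the initial conditions $\stir{0}{0}=1$, $\stir{m}{0}=0$ for $m\ge 1$ do not cause book-keeping issues (they do not, because $k\ge 1$ is assumed). An inductive proof on $k$ based on the recurrence \eqref{stir2_rec} and $\Delta^{k+1} = \Delta \circ \Delta^k$ is available as an alternative, but the direct route above is shorter and fits the tools already introduced.
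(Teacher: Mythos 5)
Your proof is correct, but it follows a genuinely different route from the paper. The paper proves the identity by induction on $k$: the base case $k=1$ is the (terminating) Taylor formula for $\Delta p^{(\ell)}$, and the inductive step applies $\Delta$ once more, swaps the resulting double sum, and invokes the recurrence \eqref{stir2_rec} together with the binomial identity \eqref{eq:Stir_Bino} to assemble $\stir{s}{k+1}$. You instead compute $\Delta^k$ in one shot via its binomial expansion, Taylor-expand each shift, interchange the two finite sums, and identify the coefficient of $p^{(m+\ell)}$ as $k!\stir{m}{k}$ through the explicit formula
\begin{equation*}
  \stir{m}{k}=\frac{1}{k!}\sum_{j=0}^{k}\binom{k}{j}(-1)^{k-j}j^{m},
\end{equation*}
which is indeed recorded in Section~\ref{sec:Stirling}. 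All steps check out: both sums are finite, the vanishing $\stir{m}{k}=0$ for $m<k$ (with $k\ge 1$) correctly collapses the range to $k\le m\le n-\ell$, and the only convention needed is $0^{0}=1$ in the $j=0$, $m=0$ term, which is exactly the convention under which the explicit formula reproduces the initial conditions. Your argument is shorter and avoids both the induction and the identity \eqref{eq:Stir_Bino}; the paper's inductive computation has the side benefit of rehearsing the sum-manipulation and recurrence techniques that reappear almost verbatim in the proof of Lemma~\ref{lem:q}, but as a standalone proof of this lemma your direct approach is arguably cleaner.
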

\begin{pf}
We prove this by induction on $k$. For $k=1$
the Taylor formula gives
\begin{equation*}
\Delta p (x) = p(x+1) - p(x) = \sum_{m=1}^{n} \frac1{m!} p^{(m)}
(x)
\end{equation*}
and for $\ell \leq n$
\begin{equation}\label{eq:Delta}
\Delta p^{(\ell)} (x) = \sum_{m=1}^{n-\ell} \frac{1}{m!} p^{(\ell+m)} (x).
\end{equation}
We assume the statement is true for $k$ and prove it for $k+1$ using \eqref{stir2_rec}, \eqref{eq:Stir_Bino} and \eqref{eq:Delta}:
\begin{align*}
\Delta^{k+1}p^{(\ell)}&=\Delta \, \Delta^k p^{(\ell)}
= \Delta \sum_{m=k}^{n-\ell} \frac{k!}{m!}\stir{m}{k}p^{(m+\ell)}
= \sum_{m=k}^{n-\ell} \frac{k!}{m!}\stir{m}{k}\sum_{s=1}^{n-m-\ell}
\frac{1}{s!}p^{(s+m+\ell)}\\
&=\sum_{m=k}^{n-\ell}\sum_{s=m+1}^{n-\ell} \frac{k!}{s!}{s \choose m}\stir{m}{k} p^{(s+\ell)}
=\sum_{s=k+1}^{n-\ell}\sum_{m=k}^{s-1} \frac{k!}{s!}{s \choose m}\stir{m}{k} p^{(s+\ell)}\\
&=\sum_{s=k+1}^{n-\ell}\frac{(k+1)!}{s!}\stir{s}{k+1}p^{(s+\ell)}.
\end{align*}
This concludes the induction.
\end{pf}

\begin{definition}\label{def:yn}
Define the following vector-valued sequences for $j\geq 0$:
\begin{align*}
\ab_{j}&:= \left(\frac1{(j+d)!}, \frac{1}{(j+d-1)!},\ldots,\frac{1}{(j+1)!}, \frac{1}{j!}\right)^T ,\\
\yb_j&:= \left( G^d_j,\ldots, G^1_j, 0 \right)^T.
\end{align*}
\end{definition}
The following lemma is essential for the main result of this paper,
Theorem \ref{thm:factorization}, since it identifies the sequence
$\yb_j$ as the correct coefficients for factorization.

\begin{lemma}\label{lem:y2}
The sequences $(\yb_j, j\geq 0)$, and $(\ab_j,j\geq 0)$, from Definition \ref{def:yn} satisfy the following property
\begin{align}\label{al:y0}
    \yb_0 &= \hat{\ab}_0\\ \label{al:yj}
\sum_{m=1}^{j}\gamma^{j}_{m}\, \yb_{m} &=\hat{\ab}_{j}, \quad
j\geq 1.
  \end{align}
  where
  \begin{equation*}
     \gamma^j_m:=\frac{m!}{j!}\stir{j}{m}.
  \end{equation*}
\end{lemma}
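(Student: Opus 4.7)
The plan is to verify the identity componentwise in $\RR^{d+1}$, observing that both the case $j=0$ and the case $j\geq 1$ reduce directly to identities that have already been recorded in Section~\ref{sec:Stirling}. The last (index-$d$) component is trivial in both statements: by construction $(\yb_m)_d=0$ for every $m\geq 0$, and $(\hat\ab_j)_d=0$ by the definition of the embedding, so both sides vanish. For the remaining components, indexed $i=0,\ldots,d-1$, I will substitute $k=d-i$, so that $k$ ranges over $\{1,\ldots,d\}$, and $(\yb_m)_i=G_m^k$ while $(\hat\ab_j)_i=1/(j+k)!$.

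For \eqref{al:y0}, the $i$-th component then reads $G_0^k=1/k!$, which is exactly the definition \eqref{eq:coeff_0}. This disposes of the case $j=0$ immediately, so I would state this as a one-line verification.

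For \eqref{al:yj} with $j\geq 1$, the $i$-th component becomes
\begin{equation*}
  \sum_{m=1}^{j} \frac{m!}{j!}\stir{j}{m}\,G_m^{k} \;=\; \frac{1}{(j+k)!}.
\end{equation*}
Multiplying by $j!$ and using $j!/(j+k)!=1/[(j+1)(j+2)\cdots(j+k)]$, this is precisely the identity \eqref{lem:stir2_coeff}, applied at the value $k\in\{1,\ldots,d\}$. Since \eqref{lem:stir2_coeff} holds for all $j,k\geq 1$, the required identity is valid for every admissible $i$, and the proof is complete.

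There is, essentially, no substantial obstacle: once one recognizes that the $d$-th slot of $\yb_j$ was built to be zero precisely so that $\hat\ab_j$ appears on the right-hand side, the content of the lemma is nothing more than a repackaging of \eqref{eq:coeff_0} and \eqref{lem:stir2_coeff} into vector form. The only care needed is bookkeeping the reverse indexing convention relating $(\yb_m)_i$ to $G_m^{d-i}$ and $(\ab_j)_i$ to $1/(j+d-i)!$; I would state this reindexing explicitly before invoking the scalar identities to avoid any confusion.
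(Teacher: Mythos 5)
Your proof is correct and follows essentially the same route as the paper's: the $j=0$ case is read off from \eqref{eq:coeff_0}, and the $j\geq 1$ case reduces componentwise (after the reverse-indexing $k=d-i$) to the identity \eqref{lem:stir2_coeff}, with the last component vanishing on both sides. Your explicit handling of the reindexing is slightly more careful than the paper's, but the argument is the same.
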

\begin{proof}
Equation \eqref{al:y0} follows from the definition of $G_0^k,k=1,\ldots,d$, 
in \eqref{eq:coeff_0}.

For $j\geq 1$ and $k=1,\ldots,d$ equation \eqref{al:yj} is equivalent to
\begin{align*}
    \sum_{m=1}^{j}\gamma^{j}_{m}\, \yb_{m,\,k}=\frac{1}{(j+k)!}
\Longleftrightarrow  &\,  \sum_{m=1}^{j}\stir{j}{m}\,m!\,
\yb_{m,\,k}=\frac{j!}{(j+k)!}=\frac{1}{(j+1)\cdots (j+k)}
\end{align*}
Since $\yb_{m,k}=G_m^k$ for $k=1,\ldots,d$, \eqref{al:yj} is true by \eqref{lem:stir2_coeff}.
For $k=0$, \eqref{al:yj} is correct because both sides equal $0$.
\end{proof}

\begin{remark}\label{rem:Taylor}
Lemma \ref{lem:y2} implies $\widetilde T_d^d= \widetilde T_d$.
\end{remark}

\begin{lemma}\label{lem:augmented_iterated}
For $d\geq 1$ and $j\geq d,$ the augmented Taylor operator satisfies
  $$
  \widetilde{T}_d^j = D \left( \Ib_d - \yb_{j-d} \eb_d^T \right) \cdots D\left( \Ib_d -
    \yb_{0} \eb_d^T \right) T_d',
  $$
with $(\yb_j,j\geq 0)$ from Definition \ref{def:yn}.
\end{lemma}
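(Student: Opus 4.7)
The plan is to prove the identity by induction on $j \ge d$. Throughout, I will use the block decomposition
\[
D\bigl(\Ib_{d+1} - \yb_{k}\eb_d^T\bigr) \;=\;
\begin{pmatrix} \Ib_d & -G_k^{d:1}\\ 0 & \Delta \end{pmatrix},
\]
which holds because the last entry of $\yb_k$ is $0$, so the bottom row of $\Ib_{d+1}-\yb_k\eb_d^T$ is simply $\eb_d^T$ and gets multiplied by $\Delta$.

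For the base case $j=d$, Remark~\ref{rem:Taylor} gives $\widetilde T_d^d = \widetilde T_d = DT_d$, so the claim reduces to $T_d = (\Ib_{d+1} - \yb_0 \eb_d^T)\,T_d'$. The last row of $T_d'$ equals $\eb_d^T$, hence $\eb_d^T T_d' = \eb_d^T$ and the right-hand side simplifies to $T_d' - \yb_0 \eb_d^T$. Comparing columns, $T_d$ and $T_d'$ agree in columns $0,\dots,d-1$, while their last columns differ exactly by $\yb_0 = (1/d!,1/(d-1)!,\dots,1,0)^T$ (using $G_0^k = 1/k!$ from \eqref{eq:coeff_0}). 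This settles the base case.

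For the inductive step, assume the identity holds for $j-1\ge d$, so
$\widetilde T_d^{j-1}$ equals the asserted product up to index $j-1-d$. Multiplying on the left by $D(\Ib_{d+1}-\yb_{j-d}\eb_d^T)$ and using the block form above together with the block form
\[
\widetilde T_d^{j-1} = \begin{pmatrix} \widetilde T_{d-1} & -\sum_{k=0}^{j-1-d}G_k^{d:1}\Delta^k\\ 0 & \Delta^{j-d}\end{pmatrix}
\]
from Definition~\ref{def:augmented}, a straightforward block matrix computation shows that the $(1,1)$ block is preserved as $\widetilde T_{d-1}$, the $(2,2)$ block becomes $\Delta\cdot\Delta^{j-d}=\Delta^{j+1-d}$, and the $(1,2)$ block absorbs one more term, namely
\[
-\sum_{k=0}^{j-1-d}G_k^{d:1}\Delta^k - G_{j-d}^{d:1}\Delta^{j-d}
\;=\; -\sum_{k=0}^{j-d} G_k^{d:1}\Delta^k.
\]
This is exactly $\widetilde T_d^j$, completing the induction.

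There is no serious obstacle here: the entire proof is a bookkeeping exercise with $2\times 2$ block matrices, and no use of the combinatorial identities from Section~\ref{sec:Stirling} or of Lemma~\ref{lem:y2} is needed. The only point requiring mild care is the base case, where one has to recognize that $T_d$ and $T_d'$ differ only in their last column and that this difference is captured precisely by $\yb_0$.
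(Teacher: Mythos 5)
Your proof is correct and follows essentially the same route as the paper's: induction on $j$, with the base case reduced via Remark~\ref{rem:Taylor} and the fact that $T_d$ and $T_d'$ differ in the last column by $\yb_0$ (i.e.\ $G_0^k=1/k!$), and the inductive step a block-matrix multiplication that appends the term $-G_{j-d}^{d:1}\Delta^{j-d}$ to the top-right block. The only cosmetic difference is that the paper peels factors off from the left going from $j$ to $j+1$, while you go from $j-1$ to $j$; your observation that no identities from Section~\ref{sec:Stirling} beyond the definition \eqref{eq:coeff_0} are needed is accurate.
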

\begin{pf}
Recall from Definition \ref{def:yn} that 
\begin{equation*}
    \yb_j= \left( G^d_j,\ldots, G^1_j, 0 \right)^T
    = \left( G^{d:1}_j,0 \right)^T
\end{equation*}
and from Lemma \ref{lem:y2} that $\yb_0=\hat{\ab}_0$.
Furthermore, note that for any vector $\cb \in \RR^{d+1}$ with $\cb_d=0$ we have
\begin{equation*}
    D \left( \Ib_d - \cb \eb_d^T \right)=
    \begin{pmatrix}
      \Ib_{d-1} & \\
      & \Delta
    \end{pmatrix}
    \begin{pmatrix}
      \Ib_{d-1} & -\cb_{0:d-1}\\
      & 1
    \end{pmatrix}
=
\begin{pmatrix}
      \Ib_{d-1} & -\cb_{0:d-1}\\
      & \Delta
\end{pmatrix}.
\end{equation*}

We prove the Lemma by induction on $j$. For $j=d$, by Remark \ref{rem:Taylor} we have
\begin{equation*}
    \widetilde{T}^d_d=\widetilde{T}_d=
    \begin{pmatrix}
      \widetilde{T}_{d-1} & -\ab_{0,0:d-1} \\
      & \Delta
    \end{pmatrix}
    =
        \begin{pmatrix}
      \Ib_{d-1} & -\yb_{0,0:d-1} \\
      & \Delta
    \end{pmatrix}
        \begin{pmatrix}
      \widetilde{T}_{d-1} &  \\
      & 1
    \end{pmatrix}
    = 
D \left( \Ib_d - \yb_0 \eb_d^T \right)
T_d'.
\end{equation*}
Assume that the Lemma is true for $j$, we prove it for $j+1$.
\begin{align*}
    \widetilde{T}_d^{j+1}&=
      \begin{pmatrix}
    \widetilde{T}_{d-1} & -\displaystyle{\sum_{k=0}^{j+1-d}} G_k^{d:1} \Delta^{k} \\
    & \Delta^{j+2-d} \\
  \end{pmatrix}
  =
        \begin{pmatrix}
    \Ib_{d-1} & -\yb_{j+1-d,0:d-1}\\
    & \Delta \\
  \end{pmatrix}
        \begin{pmatrix}
    \widetilde{T}_{d-1} & -\displaystyle{\sum_{k=0}^{j-d}} G_k^{d:1} \Delta^{k} \\
    & \Delta^{j+1-d} 
  \end{pmatrix}\\
  & =
  D \left( \Ib_d - \yb_{j+1-d} \eb_d^T \right)
  D \left( \Ib_d - \yb_{j-d} \eb_d^T \right) \cdots D\left( \Ib_d -
    \yb_{0} \eb_d^T \right) T_d',
\end{align*}
which concludes the induction step.
\end{pf}
The next lemma follows from \cite{merrienSauer12:_from_hermit} and Lemma \ref{lem:high_forward_diff}:
\begin{lemma}\label{lem:Taylor_applied}
 For $p\in \Pi$ with $\operatorname{deg}(p)=n >d $ we have
 \begin{equation*}
  \widetilde{T}_d\vb(p)= \sum_{k=1}^{n-d}\ab_{k} p^{(k+d)}.
 \end{equation*}
 If $n\leq d$ then $ \widetilde{T}_d\vb(p)=0$.
\end{lemma}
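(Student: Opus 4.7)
The plan is to compute $\widetilde T_d \vb(p)$ directly, entry by entry. For $\ell = 0, \ldots, d-1$, the $\ell$-th component of $\widetilde T_d \vb(p)$ equals $\Delta p^{(\ell)} - \sum_{j=1}^{d-\ell} \frac{1}{j!} p^{(\ell+j)}$, while the bottom component is $\Delta p^{(d)}$. Using equation \eqref{eq:Delta} (the $k=1$ case of Lemma \ref{lem:high_forward_diff}), I would expand $\Delta p^{(\ell)} = \sum_{m=1}^{n-\ell} \frac{1}{m!} p^{(\ell+m)}$. The first $d-\ell$ of these terms cancel exactly the Taylor correction, leaving $\sum_{m=d-\ell+1}^{n-\ell} \frac{1}{m!} p^{(\ell+m)}$. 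Reindexing by $k = m - (d-\ell)$ yields $\sum_{k=1}^{n-d} \frac{1}{(k+d-\ell)!} p^{(k+d)}$.

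Next I would match this against the $\ell$-th component of the claimed right-hand side $\sum_{k=1}^{n-d} \ab_k p^{(k+d)}$. By Definition \ref{def:yn}, the $\ell$-th entry of $\ab_k$ is precisely $\frac{1}{(k+d-\ell)!}$ for every $\ell \in \{0,\ldots,d\}$, so the two expressions agree for rows $0,\ldots,d-1$. The same reindexing applied to the bottom row $\Delta p^{(d)} = \sum_{k=1}^{n-d} \frac{1}{k!} p^{(k+d)}$ matches the $d$-th entry $\frac{1}{k!}$ of $\ab_k$, which finishes the case $n>d$.

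For the vanishing statement when $n \le d$, I would appeal to \cite{merrienSauer12:_from_hermit}, where this is part of the equivalence between the complete Taylor factorization and the spectral condition of order $d$: every $p\in\Pi_d$ lies in the span of the spectral polynomials and is therefore annihilated by $\widetilde T_d$. Alternatively, the same entry-by-entry computation applies: once $\ell + j > n$ the summand $p^{(\ell+j)}$ vanishes, so the Taylor correction agrees term-by-term with the expansion of $\Delta p^{(\ell)}$; and $\Delta p^{(d)} = 0$ because $p^{(d)}$ is either constant or zero.

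I do not anticipate a real obstacle: the argument is essentially index bookkeeping. The only mild care point is aligning the backwards indexing of $\ab_k$ with the forward indexing of the rows of $\widetilde T_d$, together with the off-by-one shift coming from $m = k + d - \ell$. Note that the full strength of Lemma \ref{lem:high_forward_diff} is not actually used here — only the $k=1$ case — which suggests its higher-$k$ generality is reserved for later parts of the paper (presumably the main factorization theorem and the Stirling-number identities in Lemma \ref{lem:y2}).
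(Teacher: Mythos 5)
Your computation is correct and is exactly the entry-by-entry verification that the paper leaves implicit (the paper offers no written proof, only the remark that the lemma ``follows from \cite{merrienSauer12:_from_hermit} and Lemma \ref{lem:high_forward_diff}''); your use of \eqref{eq:Delta}, i.e.\ the $k=1$ instance of that lemma, together with the indexing of $\ab_k$ from Definition \ref{def:yn}, is precisely the intended argument, including the cancellation that gives $\widetilde T_d \vb(p)=0$ for $n\le d$.
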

 We write the polynomial of Lemma \ref{lem:Taylor_applied} in the following
form
\begin{equation*}
  \label{eq:PolyRewrite}
  \sum_{k=1}^{n-d} \ab_k \, p^{(k+d)} = \eb_d q + \sum_{k=1}^{n-d} \hat \ab_k \,
  p^{(k+d)},  
\end{equation*}
where
\begin{equation}\label{eq:q}
 q = \sum_{k=1}^{n-d} \ab_{k,d}p^{(k+d)}.
\end{equation}
If $\operatorname{deg}(p)=n>d$ then $\operatorname{deg}(q)=n-d-1$.
\begin{lemma}\label{lem:q}
 For $n>d$, $0\leq k < n-d$ and the polynomial $q$ from \eqref{eq:q} we have:
 \begin{equation*}
   \Delta^k q = \sum_{s=k+1}^{n-d} \gamma_{k+1}^s p^{(s+d)},
 \end{equation*}
 with $\gamma$ defined in Lemma \ref{lem:y2}.
\end{lemma}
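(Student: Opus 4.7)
The plan is to compute $\Delta^k q$ termwise starting from $q = \sum_{j=1}^{n-d} \ab_{j,d}\, p^{(j+d)} = \sum_{j=1}^{n-d} \tfrac{1}{j!}\,p^{(j+d)}$, since $\ab_{j,d}=1/j!$. For $k=0$ the claim is immediate, because $\gamma^s_1 = \tfrac{1}{s!}\stir{s}{1} = \tfrac{1}{s!}$ reproduces the defining coefficients of $q$. For $k\geq 1$, Lemma~\ref{lem:high_forward_diff} applied to each $\Delta^k p^{(j+d)}$ (which vanishes once $k > n-d-j$) yields
\begin{equation*}
 \Delta^k q = \sum_{j=1}^{n-d-k} \frac{k!}{j!} \sum_{m=k}^{n-d-j}\frac{\stir{m}{k}}{m!}\,p^{(m+j+d)}.
\end{equation*}

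Next I would swap the order of summation via the substitution $s=m+j$: the new outer index $s$ runs from $k+1$ to $n-d$ and, for fixed $s$, the inner index $m$ runs from $k$ to $s-1$. Collecting the coefficient of $p^{(s+d)}$ produces
\begin{equation*}
 \Delta^k q = \sum_{s=k+1}^{n-d} \frac{k!}{s!}\left(\sum_{m=k}^{s-1}\binom{s}{m}\stir{m}{k}\right)p^{(s+d)}.
\end{equation*}
Since $\gamma^s_{k+1} = \tfrac{(k+1)!}{s!}\stir{s}{k+1}$, the lemma reduces to the Stirling identity
\begin{equation*}
 \sum_{m=k}^{s-1}\binom{s}{m}\stir{m}{k} \,=\, (k+1)\,\stir{s}{k+1}, \qquad 0\leq k < s.
\end{equation*}

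The main obstacle is proving this identity, which I would establish by induction on $k$. The base $k=0$ is immediate from $\stir{m}{0}=\delta_{m,0}$ and $\stir{s}{1}=1$. For the induction step, split $\binom{s}{m}=\binom{s-1}{m}+\binom{s-1}{m-1}$: the $\binom{s-1}{m}$ contribution equals $\stir{s}{k+1}$ by \eqref{eq:Stir_Bino}. For the $\binom{s-1}{m-1}$ part, shift the index to $\ell=m-1$ and apply \eqref{stir2_rec} in the form $\stir{\ell+1}{k} = k\stir{\ell}{k} + \stir{\ell}{k-1}$. The $k\stir{\ell}{k}$ summand produces $k(\stir{s}{k+1}-\stir{s-1}{k})$ after a further use of \eqref{eq:Stir_Bino}, while the $\stir{\ell}{k-1}$ summand is precisely the left-hand side of the identity at level $k-1$ and index $s-1$, which equals $k\,\stir{s-1}{k}$ by the inductive hypothesis. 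Adding the three contributions the $\pm k\,\stir{s-1}{k}$ terms cancel and one obtains $(k+1)\,\stir{s}{k+1}$, completing the proof.
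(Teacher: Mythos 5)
Your proof is correct and follows essentially the same route as the paper: expand $\Delta^k q$ termwise via Lemma~\ref{lem:high_forward_diff}, reindex the double sum by $s=m+j$, and reduce the claim to the identity $\sum_{m=k}^{s-1}\binom{s}{m}\stir{m}{k}=(k+1)\stir{s}{k+1}$. The only difference is that you establish this identity by a separate induction on $k$, whereas the paper gets it in one line by applying \eqref{eq:Stir_Bino} with upper summation limit $s$ (yielding $\stir{s+1}{k+1}-\stir{s}{k}$) and then the recurrence \eqref{stir2_rec}; your induction is valid but an unnecessary detour.
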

\begin{proof}
Note that the result is true for $k=0$. For $k\geq 1$ we use
Definition \ref{def:yn}, Lemma \ref{lem:high_forward_diff},
\eqref{stir2_rec}, and \eqref{eq:Stir_Bino}:
\begin{align*}
  \lefteqn{\frac{1}{k!}\Delta^k q 
                                    = \frac{1}{k!}
  \sum_{\ell=1}^{n-d-k} \ab_{\ell,d} \Delta^k p^{(\ell + d)} 
   = \sum_{\ell=1}^{n-d-k} \ab_{\ell,d}
  \sum_{m=k}^{n-d-\ell} \frac{1}{m!}\stir{m}{k}p^{(m+\ell+d)}}\\
  & = \sum_{\ell=1}^{n-d-k}\sum_{m=k}^{n-d-\ell}
   \frac{1}{\ell! \, m!}\stir{m}{k}p^{(m+\ell+d)}
   = \sum_{\ell=1}^{n-d-k}\sum_{s=k+\ell}^{n-d}
   \frac{1}{\ell! \, (s-\ell)!}\stir{s-\ell}{k}p^{(s+d)}\\
  & = \sum_{s=k+1}^{n-d}\sum_{\ell=1}^{s-k}
  \frac{1}{\ell! \, (s-\ell)!}\stir{s-\ell}{k}p^{(s+d)}
   = \sum_{r=1}^{n-d-k}\sum_{\ell=1}^{r}
  \frac{1}{\ell! \, (r+k-\ell)!}\stir{r+k-\ell}{k}p^{(r+k+d)}\\
  & = \sum_{r=1}^{n-d-k}\sum_{s=k}^{r+k-1}
  \frac{1}{ (r+k-s)!s!}\stir{s}{k}p^{(r+k+d)}
   = \sum_{r=1}^{n-d-k}\frac{1}{(r+k)!} \sum_{s=k}^{r+k-1}
  {r+k \choose s}\stir{s}{k}p^{(r+k+d)}\\
  & = \sum_{r=1}^{n-d-k} \frac{1}{(r+k)!}
  \left(\stir{r+k+1}{k+1}-\stir{r+k}{k} \right) p^{(r+k+d)}
   = \sum_{r=1}^{n-d-k} \frac{(k+1)}{(r+k)!}\stir{r+k}{k+1}p^{(r+k+d)}.
  \end{align*}
This implies
\begin{equation*}
 \Delta^k q  = \sum_{r=1}^{n-d-k} \gamma_{k+1}^{r+k}p^{(r+k+d)}
  = \sum_{s=k+1}^{n-d} \gamma_{k+1}^{s}p^{(s+d)}. \qedhere
\end{equation*}
\end{proof}

\begin{lemma}\label{lem:IterLemma2}
  For $p \in \Pi, \operatorname{deg}(p)=n$, $n>d$ and $(\cb_k,k\geq 1)$
  such that $\cb_{k,d}=0$ for all $k$, we have
  \begin{equation*}
    D \left( \Ib_d - \cb_j \eb^T_d \right) \cdots D \left( \Ib_d - \cb_{1}
      \eb^T_d \right) \sum_{k=1}^{n-d} \ab_k \, p^{(k+d)} 
    =\, \eb_d \Delta^{j} q - \sum_{k=0}^{j-1}\hat{\cb}_{k+1}\Delta^{k}q
    +\sum_{k=1}^{n-d} \hat \ab_k \, p^{(k+d)},
  \end{equation*}    
 for some $1\leq j \leq n-d$.
\end{lemma}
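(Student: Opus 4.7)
The plan is to prove the identity by induction on $j$ after first splitting off the contribution along $\eb_d$. Specifically, I would rewrite
$$\sum_{k=1}^{n-d} \ab_k\, p^{(k+d)} = \sum_{k=1}^{n-d} \hat{\ab}_k\, p^{(k+d)} + \eb_d\, q,$$
using $\ab_k - \hat{\ab}_k = \ab_{k,d}\,\eb_d$ together with the definition of $q$ in \eqref{eq:q}. This decomposition is essential because each factor $D(\Ib_d - \cb_i \eb_d^T)$ interacts with its argument only through its last coordinate, and isolating $q$ makes that coordinate transparent from the start.

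The key technical ingredient will be a one-factor formula: for any vector-valued sequence $\vb = \hat{\vb} + \eb_d\, w$ and any $\cb\in\RR^{d+1}$ with $\cb_d=0$,
$$D\bigl(\Ib_d - \cb\,\eb_d^T\bigr)\,\vb \;=\; \hat{\vb} - \hat{\cb}\,w + \eb_d\,\Delta w.$$
This is a direct block-matrix computation from the representation
$$D\bigl(\Ib_d - \cb\,\eb_d^T\bigr) = \begin{pmatrix} \Ib_{d-1} & -\cb_{0:d-1} \\ & \Delta \end{pmatrix}$$
already derived in the proof of Lemma~\ref{lem:augmented_iterated}, together with $\cb_d=0$. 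The crucial structural point is that the output again has the same shape $\hat{\vb}' + \eb_d\, w'$, with updated last-coordinate sequence $w' = \Delta w$ and the non-$\eb_d$ part modified by the additive term $-\hat{\cb}\,w$; so the shape is preserved under repeated application.

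Given these two ingredients the induction is straightforward bookkeeping. For $j=1$, applying the one-factor formula to $\vb = \sum_k \hat{\ab}_k\, p^{(k+d)} + \eb_d q$ gives exactly $\sum_k \hat{\ab}_k\, p^{(k+d)} - \hat{\cb}_1 q + \eb_d \Delta q$, which matches the claim. For the step $j\to j+1$, the inductive hypothesis puts the vector in the form
$$\Bigl(-\sum_{k=0}^{j-1}\hat{\cb}_{k+1}\Delta^k q + \sum_{k=1}^{n-d}\hat{\ab}_k p^{(k+d)}\Bigr) + \eb_d\,\Delta^j q,$$
and the one-factor formula with $\cb = \cb_{j+1}$ and $w = \Delta^j q$ then appends $-\hat{\cb}_{j+1}\Delta^j q$ to the non-$\eb_d$ part and replaces $\Delta^j q$ by $\Delta^{j+1} q$ in the last coordinate, delivering the $j+1$ version of the identity. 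I do not anticipate a real obstacle: the only subtle point is that the hypothesis $\cb_{i,d}=0$ is exactly what prevents the $\Delta$ of $D$ from mixing the $\eb_d$-part back into the first $d$ coordinates, and hence exactly what keeps the clean telescoping form throughout the induction.
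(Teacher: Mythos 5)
Your proposal is correct and follows essentially the same route as the paper: induction on $j$, splitting the sum into $\eb_d\,q$ plus the $\hat{\ab}_k$-part, and using the block form of $D(\Ib_d-\cb\,\eb_d^T)$ together with $\cb_d=0$ so that only the last coordinate is acted on nontrivially. The only difference is presentational — you isolate the single-factor update as a reusable formula, while the paper carries out the same block computation inline in both the base case and the induction step.
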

\begin{proof}
We prove this lemma by induction on $j$. 
First note that the operator $D \left( \Ib - \cb \eb^T_d \right)$ for 
any $\cb$ with $\cb_d=0$,
acts as the identity operator on vectors with last component equal to $0$.
Therefore
\begin{align*}
 D \left( \Ib - \cb \eb^T_d \right)\sum_{k=1}^{n-d}\ab_{k} p^{(k+d)}&=
 D \left( \Ib - \cb \eb^T_d \right)\eb_d q + \sum_{k=1}^{n-d} \hat \ab_k
 = 
 \begin{pmatrix}
      \Ib_{d-1} & -\cb_{0:d-1}\\
      & \Delta
\end{pmatrix}
\begin{pmatrix}
 0 \\
 q
\end{pmatrix}
+ \sum_{k=1}^{n-d} \hat \ab_k\\
& = 
\begin{pmatrix}
 -\cb_{0:d-1}q \\
 \Delta q
\end{pmatrix}
+ \sum_{k=1}^{n-d} \hat \ab_k\\
&=
\eb_d\Delta q -\hat{\cb}q + \sum_{k=1}^{n-d} \hat \ab_k.
\end{align*}
This proves the case $j=1$. 
Assume that the lemma is true for $j$, we prove it for $j+1$.
\begin{align*}
&D \left( \Ib_d - \cb_{j+1} \eb^T_d \right) \cdots D \left( \Ib_d - \cb_{1}
      \eb^T_d \right) \sum_{k=1}^{n-d} \ab_k \, p^{(k+d)} \\
    &=D \left( \Ib_d - \cb_{j+1} \eb^T_d \right)\left( \eb_d \Delta^{j} q 
    - \sum_{k=0}^{j-1}\hat{\cb}_{k+1}\Delta^{k}q
    +\sum_{k=1}^{n-d} \hat \ab_k \, p^{(k+d)}\right)\\
    &=
 \begin{pmatrix}
      \Ib_{d-1} & -\cb_{j+1,0:d-1}\\
      & \Delta
\end{pmatrix} 
\begin{pmatrix}
 0 \\
 \Delta^j q
\end{pmatrix}
- \sum_{k=0}^{j-1}\hat{\cb}_{k+1}\Delta^{k}q
 +\sum_{k=1}^{n-d} \hat \ab_k \, p^{(k+d)}\\
 &=
 \begin{pmatrix}
  -\cb_{j+1,0:d-1}\Delta^{j}q\\
  \Delta^{j+1}q
 \end{pmatrix}
 - \sum_{k=0}^{j-1}\hat{\cb}_{k+1}\Delta^{k}q
     +\sum_{k=1}^{n-d} \hat \ab_k \, p^{(k+d)}\\
&= \eb_d  \Delta^{j+1}q  - \sum_{k=0}^{j}\hat{\cb}_{k+1}\Delta^{k}q
     +\sum_{k=1}^{n-d} \hat \ab_k \, p^{(k+d)},    
\end{align*}
which concludes the induction step.
\end{proof}

\noindent 
Lemma \ref{lem:q} also has the following consequence.

\begin{corollary}\label{cor:IterCor}
With notation as in Lemma \ref{lem:IterLemma2} we have
   \begin{equation*}
    D \left( \Ib_d - \cb_j \eb^T_d \right) \cdots D \left( \Ib_d - \cb_{1}
      \eb^T_d \right) \sum_{k=1}^{n-d} \ab_k \, p^{(k+d)} 
    =\, \eb_d \Delta^{j} q 
    +\sum_{s=1}^{n-d} \left( \hat \ab_s 
    -\sum_{k=1}^{\min\{ s,j\}}\gamma_{k}^s \hat{\cb}_k \right) p^{(s+d)}.
  \end{equation*} 
\end{corollary}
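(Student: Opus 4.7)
The plan is to derive the corollary as a direct consequence of Lemma~\ref{lem:IterLemma2} combined with the closed form for $\Delta^k q$ provided by Lemma~\ref{lem:q}. Lemma~\ref{lem:IterLemma2} already gives
\[
D\left(\Ib_d - \cb_j \eb_d^T\right) \cdots D\left(\Ib_d - \cb_1 \eb_d^T\right) \sum_{k=1}^{n-d} \ab_k\, p^{(k+d)}
= \eb_d \Delta^j q - \sum_{k=0}^{j-1} \hat{\cb}_{k+1} \Delta^k q + \sum_{k=1}^{n-d} \hat{\ab}_k\, p^{(k+d)},
\]
so the only remaining work is to rewrite the middle term in a form that matches the claimed right-hand side.

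First, I would apply Lemma~\ref{lem:q} to replace $\Delta^k q$ by $\sum_{s=k+1}^{n-d} \gamma_{k+1}^{s} p^{(s+d)}$ inside the middle sum, and then shift the outer index from $k$ to $k' = k+1$ to obtain
\[
- \sum_{k=0}^{j-1} \hat{\cb}_{k+1} \Delta^k q
= - \sum_{k'=1}^{j} \hat{\cb}_{k'} \sum_{s=k'}^{n-d} \gamma_{k'}^{s}\, p^{(s+d)}.
\]
Next, I would swap the order of summation. The constraint $1 \le k' \le j$ together with $k' \le s \le n-d$ is equivalent to $1 \le s \le n-d$ and $1 \le k' \le \min\{s, j\}$, which yields
\[
- \sum_{k=0}^{j-1} \hat{\cb}_{k+1} \Delta^k q
= - \sum_{s=1}^{n-d} \left( \sum_{k=1}^{\min\{s,j\}} \gamma_{k}^{s} \hat{\cb}_k \right) p^{(s+d)}.
\]

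Finally, I would combine this with the last term $\sum_{k=1}^{n-d} \hat{\ab}_k\, p^{(k+d)}$ (reindexed as a sum over $s$) to obtain exactly
\[
\eb_d \Delta^j q + \sum_{s=1}^{n-d} \left( \hat{\ab}_s - \sum_{k=1}^{\min\{s,j\}} \gamma_{k}^{s} \hat{\cb}_k \right) p^{(s+d)},
\]
which completes the proof. There is no substantial obstacle: the argument is purely bookkeeping, and the only point that requires a little care is the correct description of the triangular summation region after the index swap, which determines the appearance of $\min\{s,j\}$ in the upper summation limit.
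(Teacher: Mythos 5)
Your proof is correct and follows exactly the route the paper intends: the corollary is stated as an immediate consequence of Lemma~\ref{lem:IterLemma2} and Lemma~\ref{lem:q}, and your substitution of $\Delta^k q = \sum_{s=k+1}^{n-d}\gamma_{k+1}^s p^{(s+d)}$ followed by the index shift and summation swap (with the correct $\min\{s,j\}$ upper limit) is precisely the omitted bookkeeping. Note also that the hypothesis $k < n-d$ of Lemma~\ref{lem:q} is satisfied since $k \le j-1 \le n-d-1$, so the substitution is legitimate throughout.
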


\begin{lemma}\label{Lem:IterConst}
For $p \in \Pi, \operatorname{deg}(p)=n$, $n>d$, normalized such that
$p(x)= \frac{1}{n!}x^n+\ldots,$
and $(\yb_k,k\geq 1)$ from Definition \ref{def:yn}, we have
  \begin{equation*}
    D \left( \Ib_d - \yb_{n-d-1} \eb^T_d \right) \cdots D \left( \Ib_d - \yb_{1}
      \eb^T_d \right) \sum_{k=1}^{n-d} \ab_k \, p^{(k+d)} 
    = \eb_d + \yb_{n-d}.
  \end{equation*}     
\end{lemma}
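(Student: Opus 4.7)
The plan is to invoke Corollary \ref{cor:IterCor} with the choice $\cb_k = \yb_k$ (which is legitimate since $\yb_{k,d}=0$ by Definition \ref{def:yn}) and with $j = n-d-1$. This reduces the left-hand side to
\begin{equation*}
\eb_d\,\Delta^{n-d-1}q \;+\; \sum_{s=1}^{n-d}\Bigl(\hat{\ab}_s - \sum_{k=1}^{\min\{s,n-d-1\}}\gamma_k^s\,\hat{\yb}_k\Bigr)p^{(s+d)}.
\end{equation*}
Since $\hat{\yb}_k = \yb_k$ (the last entry is already zero), the inner sum is exactly the one appearing in Lemma \ref{lem:y2}.

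The first key step is to show that all terms with $1 \le s \le n-d-1$ vanish. For such $s$, we have $\min\{s,n-d-1\}=s$, so the coefficient is $\hat{\ab}_s - \sum_{k=1}^{s}\gamma_k^s\,\yb_k$, which is zero by Lemma \ref{lem:y2}. The second key step handles $s = n-d$: there the coefficient equals
\begin{equation*}
\hat{\ab}_{n-d} - \sum_{k=1}^{n-d-1}\gamma_k^{n-d}\yb_k \;=\; \gamma_{n-d}^{n-d}\,\yb_{n-d} \;=\; \yb_{n-d},
\end{equation*}
again by Lemma \ref{lem:y2}, using that $\gamma_{n-d}^{n-d} = \stir{n-d}{n-d} = 1$.

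Finally, I would evaluate the remainder term $\eb_d\,\Delta^{n-d-1}q$. By Lemma \ref{lem:q}, $\Delta^{n-d-1}q$ collapses to the single summand $\gamma_{n-d}^{n-d}\,p^{(n)} = p^{(n)}$, and the normalization $p(x) = \frac{1}{n!}x^n + \cdots$ gives $p^{(n)} = 1$. Combined with $p^{(n)} = 1$ multiplying the $\yb_{n-d}$ coefficient, this yields $\eb_d + \yb_{n-d}$.

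The only place where anything could go wrong is bookkeeping: one must be careful that the index range in Corollary \ref{cor:IterCor} matches the range in Lemma \ref{lem:y2} exactly at the boundary $s = n-d$, where the truncation $\min\{s,n-d-1\} = n-d-1$ drops precisely the last term $\gamma_{n-d}^{n-d}\yb_{n-d}$ that survives as the answer. Once this boundary is handled, the proof is essentially a two-line consequence of Lemmas \ref{lem:y2} and \ref{lem:q}.
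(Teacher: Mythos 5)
Your proposal is correct and follows essentially the same route as the paper's own proof: apply Corollary \ref{cor:IterCor} with $\cb_k=\yb_k$ and $j=n-d-1$, kill the terms $s\le n-d-1$ via Lemma \ref{lem:y2}, identify the surviving $s=n-d$ coefficient as $\gamma_{n-d}^{n-d}\yb_{n-d}=\yb_{n-d}$, and evaluate $\Delta^{n-d-1}q=p^{(n)}=1$ via Lemma \ref{lem:q} and the normalization. Your explicit handling of the boundary term $s=n-d$ is exactly the step the paper performs implicitly in its last display.
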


\begin{pf}
Lemma \ref{lem:q} implies $\Delta^{n-d-1}q=p^{(n)}=1$, since $p$ is normalized.
Corollary \ref{cor:IterCor} and Lemma \ref{lem:y2} now imply
\begin{align*}
&D \left( \Ib - \yb_{n-d-1} \eb^T_d \right) \cdots D \left( \Ib - \yb_{1}
      \eb^T_d \right) \sum_{k=1}^{n-d} \ab_k \, p^{(k+d)}\\
&= \eb_d \Delta^{n-d-1} q
    +\sum_{s=1}^{n-d} \left(\hat \ab_s -
    \sum_{k=1}^{\min\{s,n-d-1\}} \gamma^{s}_{k}\, \yb_{k} \right) p^{(s+d)}\\
&=   \eb_d 
    +\sum_{s=1}^{n-d-1} \left(\hat \ab_s -\sum_{k=1}^{s}
    \gamma^{s}_{k}\, \yb_{k} \right) p^{(s+d)}
    + \left(\hat \ab_{n-d} -\sum_{k=1}^{n-d-1}
    \gamma^{n-d}_{k}\, \yb_{k} \right) p^{(n)}\\
&= \eb_d + \yb_{n-d}.
\end{align*}
This concludes the proof.
\end{pf}

\noindent
Finally,
Lemma \ref{lem:augmented_iterated} and Lemma \ref{Lem:IterConst} imply
the following result.

\begin{corollary}\label{cor:augmented_eigenvectors}
With notation as in Lemma \ref{Lem:IterConst} we have
\begin{equation*}
 \widetilde{T}_d^{n-1}\vb(p)=\eb_d+\yb_{n-d}.
\end{equation*}
\end{corollary}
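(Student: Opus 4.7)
The plan is to string together the three preceding technical lemmas: the factorization of $\widetilde{T}_d^{n-1}$ from Lemma \ref{lem:augmented_iterated}, the evaluation of the complete Taylor operator on $\vb(p)$ from Lemma \ref{lem:Taylor_applied}, and the explicit computation of the iterated action in Lemma \ref{Lem:IterConst}. Nothing new has to be proved; the whole argument is a matter of identifying the composition in the correct order.

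First, I would apply Lemma \ref{lem:augmented_iterated} with $j = n-1$, which gives
\begin{equation*}
  \widetilde{T}_d^{n-1} = D\!\left(\Ib_d - \yb_{n-1-d}\eb_d^T\right) \cdots D\!\left(\Ib_d - \yb_1 \eb_d^T\right) \, D\!\left(\Ib_d - \yb_0 \eb_d^T\right) T_d'.
\end{equation*}
Next, I would collapse the innermost two factors. By Remark \ref{rem:Taylor} (or equivalently the $j=d$ case of Lemma \ref{lem:augmented_iterated}) one has $D(\Ib_d - \yb_0 \eb_d^T) T_d' = \widetilde{T}_d$, and Lemma \ref{lem:Taylor_applied} then gives
\begin{equation*}
  D\!\left(\Ib_d - \yb_0 \eb_d^T\right) T_d'\, \vb(p) = \widetilde{T}_d \vb(p) = \sum_{k=1}^{n-d} \ab_k \, p^{(k+d)},
\end{equation*}
using that $\deg(p) = n > d$.

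Finally, I would feed this vector into the remaining $n-1-d$ operators. The resulting expression
\begin{equation*}
  D\!\left(\Ib_d - \yb_{n-1-d}\eb_d^T\right) \cdots D\!\left(\Ib_d - \yb_1 \eb_d^T\right) \sum_{k=1}^{n-d} \ab_k \, p^{(k+d)}
\end{equation*}
is \emph{exactly} the left-hand side of Lemma \ref{Lem:IterConst} (with the vectors $\cb_k$ specialized to $\yb_k$, which meet the hypothesis $\yb_{k,d}=0$ by Definition \ref{def:yn}, and with the normalization $p(x) = \frac{1}{n!}x^n + \cdots$ assumed in the statement of the corollary). Lemma \ref{Lem:IterConst} evaluates this to $\eb_d + \yb_{n-d}$, which is the claim.

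There is no real obstacle here, since all the combinatorial work involving Stirling numbers and the identities \eqref{lem:stir2_coeff}, \eqref{eq:Stir_Bino} has already been absorbed into Lemma \ref{lem:y2}, Lemma \ref{lem:q}, Corollary \ref{cor:IterCor}, and Lemma \ref{Lem:IterConst}. The only bookkeeping risk is aligning the indices correctly: the augmented operator of order $n-1$ contains precisely $n-d$ blocks $D(\Ib_d - \yb_j \eb_d^T)$ indexed by $j=0,\ldots,n-1-d$, of which the first produces the Taylor vector $\sum_k \ab_k p^{(k+d)}$ and the remaining $n-1-d$ match exactly the iteration in Lemma \ref{Lem:IterConst}.
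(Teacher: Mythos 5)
Your proof is correct and is precisely the argument the paper intends (the paper merely states that Lemma \ref{lem:augmented_iterated} and Lemma \ref{Lem:IterConst} imply the corollary, and you have filled in the details, including the necessary intermediate step $D(\Ib_d - \yb_0\eb_d^T)T_d'\,\vb(p) = \widetilde{T}_d\vb(p) = \sum_{k=1}^{n-d}\ab_k p^{(k+d)}$ from Lemma \ref{lem:Taylor_applied}). The index bookkeeping is right: the factorization of $\widetilde{T}_d^{n-1}$ has blocks indexed $0,\ldots,n-1-d$, the innermost one produces the Taylor vector, and the remaining $n-1-d$ blocks match Lemma \ref{Lem:IterConst} exactly.
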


\section{Factorization with respect to the augmented Taylor operator}
\label{sec:augmented}

We can now apply the results from the preceding sections to describe
the factorization for Hermite schemes with a spectral condition of possibly
higher order $n \ge d$. It is based on the augmented Taylor operator,
hence on combining Taylor operators with appropriate difference
operators of rank $1$.

\begin{theorem}[Main result]\label{thm:factorization}
 If $S_\Ab$ satisfies the spectral condition
  \eqref{eq:SpecCond} with $n\geq d$,
  then there exist subdivision operators $S_{\Bb_j}, j=d,\ldots,n$, such that we can factorize
  \begin{equation}
    \label{eq:OverFactorization}
    \widetilde{T}_d^j S_\Ab = 2^{-j} \, S_{\Bb_j} \widetilde{T}_d^j,
  \end{equation}
  with the augmented Taylor operator $\widetilde{T}_d^j$ from Definition \ref{def:augmented}. Furthermore $\dim\,\cE_{\Bb_j}=1,j=d\ldots,n,$ and
  the factorization \eqref{eq:OverFactorization} is independent of the concrete spectral polynomials in
  \eqref{eq:SpecCond}.
\end{theorem}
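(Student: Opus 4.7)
My plan is to proceed by induction on $j \in \{d, d+1, \ldots, n\}$, using Lemma \ref{lem:rank1_factorization} as the inductive engine and Corollary \ref{cor:augmented_eigenvectors} to pin down the distinguished eigenvector at each stage. The base case $j = d$ is essentially given: Remark \ref{rem:Taylor} yields $\widetilde{T}_d^d = \widetilde{T}_d$, and then \eqref{eq:level0} together with the discussion just preceding it delivers both the factorization and $\dim \cE_{\Bb_d} = 1$.

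For the inductive step, suppose $\widetilde{T}_d^j S_\Ab = 2^{-j} S_{\Bb_j} \widetilde{T}_d^j$ with $\dim \cE_{\Bb_j} = 1$, for some $d \leq j < n$. I first locate a $\tfrac{1}{2}$-eigenvector of $S_{\Bb_j}$: applying the inductive factorization to $\vb(p_{j+1})$, invoking the spectral condition $S_\Ab \vb(p_{j+1}) = 2^{-(j+1)}\vb(p_{j+1})$, and evaluating $\widetilde{T}_d^j \vb(p_{j+1}) = \eb_d + \yb_{j+1-d}$ via Corollary \ref{cor:augmented_eigenvectors} yields
\[
S_{\Bb_j}\bigl(\eb_d + \yb_{j+1-d}\bigr) = \tfrac{1}{2}\bigl(\eb_d + \yb_{j+1-d}\bigr),
\]
so $\eb_d + \yb_{j+1-d}$ is a $1$-eigenvector of $2 S_{\Bb_j}$. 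I then take $\Vb_j := \Ib_{d+1} + \yb_{j+1-d}\eb_d^T$, whose last column is $\eb_d + \yb_{j+1-d}$ and whose inverse equals $\Ib_{d+1} - \yb_{j+1-d}\eb_d^T$ (using $\yb_{j+1-d,d}=0$). Lemma \ref{lem:rank1_factorization} applied to $2 S_{\Bb_j}$ with $\Vb_j$ produces a subdivision operator $S_{\Bb_{j+1}}$ with $\dim \cE_{\Bb_{j+1}} = 1$ satisfying $D\Vb_j^{-1}\cdot 2 S_{\Bb_j} = S_{\Bb_{j+1}} D\Vb_j^{-1}$.

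To close the induction I invoke Lemma \ref{lem:augmented_iterated} to identify $\widetilde{T}_d^{j+1} = D\Vb_j^{-1}\widetilde{T}_d^j$ and chain the two factorizations:
\[
\widetilde{T}_d^{j+1} S_\Ab = D\Vb_j^{-1}\widetilde{T}_d^j S_\Ab = 2^{-j} D\Vb_j^{-1} S_{\Bb_j}\widetilde{T}_d^j = 2^{-(j+1)} S_{\Bb_{j+1}}\widetilde{T}_d^{j+1},
\]
the last equality absorbing one factor of $2$ via $2^{-j}\cdot\tfrac{1}{2} = 2^{-(j+1)}$. Independence of the concrete spectral polynomials is then automatic: $\widetilde{T}_d^j$ is built from the universal Gregory-type coefficients $G_k^\ell$ alone, and the polynomials $p_k$ enter the inductive construction only through the normalization $p_k(x) = \frac{1}{k!}x^k + \cdots$ exploited by Corollary \ref{cor:augmented_eigenvectors}; their lower-order coefficients play no role.

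The main obstacle I foresee is verifying the hypothesis of Lemma \ref{lem:rank1_factorization} at each step, namely that $\Vb_j$ is genuinely an $\cE$-generator for $2 S_{\Bb_j}$; equivalently, that the $\tfrac{1}{2}$-eigenspace of $S_{\Bb_j}$ is exactly one-dimensional. Merely knowing that $\eb_d + \yb_{j+1-d}$ lies in this eigenspace is not a priori enough, so I expect an auxiliary dimension count---presumably tracing how the spectrum of $S_{\Bb_j}$ evolves under the iterated rank $1$ reductions---to be required to rule out additional $\tfrac{1}{2}$-eigenvectors.
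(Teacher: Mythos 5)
Your proposal reproduces the paper's proof essentially step for step: the same induction on $j$, the same base case via Remark~\ref{rem:Taylor} and \eqref{eq:level0}, the same identification $\widetilde{T}_d^j \vb(p_{j+1}) = \eb_d + \yb_{j+1-d}$ through Corollary~\ref{cor:augmented_eigenvectors} combined with the spectral condition, the same Gau{\ss} matrix \eqref{eq:Gauss} as generator with inverse $\Ib_d - \yb_{j+1-d}\eb_d^T$, the same application of Lemma~\ref{lem:rank1_factorization} to $2S_{\Bb_j}$, and the same chaining via Lemma~\ref{lem:augmented_iterated}. The only point of divergence is the concern you raise at the end. There the paper is terse rather than more detailed: it simply writes that $\eb_d+\yb_{j+1-d}$ lies in $\cE_{2\Bb_j}$ and that ``by assumption the dimension of this space is $1$'', i.e.\ it treats the induction hypothesis $\dim\cE_{\Bb_j}=1$ as settling the matter, and no auxiliary dimension count or spectral bookkeeping appears. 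Strictly speaking $\cE_{\Bb_j}$ and $\cE_{2\Bb_j}$ are different spaces (the eigenvalue-$1$ and eigenvalue-$\tfrac12$ eigenspaces of $S_{\Bb_j}$ on constant sequences), so your instinct that membership of one vector does not by itself give one-dimensionality is correct, and you have in fact put your finger on the one step the paper leaves implicit. If you want to make this airtight you should either supply the missing count yourself or verify in \cite{MicchelliSauer97a,micchelli98} that the construction of the factor operator only requires a distinguished common eigenvector spanning the last column of the generator, the exact rank entering only in the converse statements; either way, do not expect to find that argument written out in the source.
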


\begin{pf}
Denote by $p_k,k=0,\ldots,n$, the spectral polynomials from
\eqref{eq:SpecCond}. Due to their normalization we have $p^{(k)}_k=1$.

We prove this result by induction on $j$.
From Remark \ref{rem:Taylor} we have $\widetilde{T}_d^d=\widetilde{T}_d$ and
the existence of $S_{\Bb_d}$ follows from \cite{merrienSauer12:_from_hermit}, see \eqref{eq:Taylor_factorization}. Also $\dim\,\cE_{\Bb_d}=1$ follows from \cite{merrienSauer12:_from_hermit}.
This 
shows the case $j=d$.

We assume that the theorem is true for $j$ and prove it for $j+1$.
Lemma \ref{lem:Taylor_applied} and Corollary \ref{cor:augmented_eigenvectors}
imply
\begin{equation*}
 \widetilde{T}_d^j\vb(p_{j+1})=\eb_d + \yb_{j+1-d}.
\end{equation*}
The spectral condition implies
  \begin{align*}
   2^{-j-1}(\eb_d + \yb_{j+1-d})&= 2^{-j-1}\widetilde{T}_d^j \vb(p_{j+1})
   =\widetilde{T}_d^j S_\Ab \vb(p_{j+1}) = 
    2^{-j} \, S_{\Bb_j} \widetilde{T}_d\vb(p_{j+1})\\
    &=2^{-j} \, S_{\Bb_j} (\eb_d + \yb_{j+1-d}),
  \end{align*}
and thus
 \begin{equation*}
  2\, S_{\Bb_{j}}\left( \eb_d + \yb_{j+1-d}\right) = \eb_d + \yb_{j+1-d}.
 \end{equation*}
Therefore $\eb_d + \yb_{j+1-d}$ lies in $\cE_{2\,\Bb_j}$ and since by assumption
the dimension of this space is $1$, it is spanned by $\eb_d + \yb_{j+1-d}$. Now we use Lemma \ref{lem:rank1_factorization} to factorize further. The Gau{\ss} matrix
\begin{equation}\label{eq:Gauss}
\Ib_d + \yb_{j+1-d} \eb_d^T =
\begin{pmatrix}
  1 & & & \yb_{j+1-d,0} \\
  & \ddots & & \vdots \\
  & & 1 & \yb_{j+1-d,d-1} \\
  & & & 1 \\
\end{pmatrix},
\end{equation}
is an $\cE_{2\, \Bb_j}$-generator.
It is easy to check that $\left( \Ib_d + \yb_{j+1-d} \eb_d^T
\right)^{-1} = \Ib_d - \yb_{j+1-d} \eb_d^T$.
Lemma \ref{lem:rank1_factorization} thus implies that there exists a subdivision
operator $S_{\Bb_{j+1}}$ such that
\begin{equation}\label{eq:vector_factor}
 2 D\left(\Ib_d - \yb_{j+1-d} \eb_d^T\right) S_{\Bb_j}= 
 S_{\Bb_{j+1}}D\left(\Ib_d - \yb_{j+1-d} \eb_d^T\right)
\end{equation}
and such that $\dim\,\cE_{\Bb_{j+1}}=1$.
The factorization \eqref{eq:vector_factor} further implies
\begin{align*}
 D\left(\Ib_d - \yb_{j+1-d} \eb_d^T\right)\widetilde{T}_d^j S_{\Ab}&=
 2^{-j}D\left(\Ib_d - \yb_{j+1-d} \eb_d^T\right)S_{\Bb_j}\widetilde{T}_d^j\\
 & =
 2^{-j-1}S_{\Bb_{j+1}}D\left(\Ib_d - \yb_{j+1-d} \eb_d^T\right)\widetilde{T}_d^j.
\end{align*}
From Lemma \ref{lem:augmented_iterated} we know that 
$D\left(\Ib_d - \yb_{j+1-d} \eb_d^T\right)\widetilde{T}_d^j=\widetilde{T}^{j+1}_d$.
This concludes the induction.
\end{pf}
\begin{remark}\label{rem:greg_operator}
  Theorem \ref{thm:factorization} for $d = 1$ and Definition \ref{def:augmented} give
  \begin{equation*}
      \widetilde{T}_1^j = \begin{pmatrix}
        \Delta & - \sum_{k=0}^{n-1}G_k^1 \Delta^k \\
        0 & \Delta^j
      \end{pmatrix} = \left(\begin{array}{cc} 0 & 1 \\ 1 & 0\end{array}\right) \mathcal{G}^{[j]},
  \end{equation*}
  where $G^1_k$ are the Gregory coefficients, see Section \ref{sec:Stirling}, and $\mathcal{G}^{[j]}$ is the \emph{Gregory operator} derived in \cite{moosmueller18b}. Therefore, Theorem \ref{thm:factorization} generalizes \cite{moosmueller18b}. Note that the matrix $\left(\begin{array}{cc} 0 & 1 \\ 1 & 0\end{array}\right)$ appears since we use \eqref{eq:Gauss} to transform to $\eb_1$ while \cite{moosmueller18b} uses an equivalent factorization as in Lemma \ref{lem:rank1_factorization} where a transform to $\eb_0$ is needed. The factorization is correct in both cases.
\end{remark}
\begin{remark}
 The paper \cite{jeong19} proves factorization and convergence results for level-dependent Hermite subdivision schemes of dimension $d=1$. In particular it considers schemes \eqref{eq:sds}, where the operators $S_{\Ab^{[j]}}, j \in \NN$, are
 \emph{not} restricted to the form \eqref{eq:stationary_mask}. From results 5.6 -- 5.8 in \cite{jeong19} we can deduce an interesting connection to the augmented Taylor operator.
 
 Consider a subdivision operator $S_{\Ab^{[j]}}$ of dimension $d=1$ which reproduces $\{1,x,e^{\lambda x}\}$ (this implies that it satisfies the spectral condition \eqref{eq:SpecCond} with the functions $1,x$ and $e^{\lambda x}$). Then there exists a subdivision operator $S_{\Bb^{[j]}}$ such that
 \begin{equation*}
     R^{[j+1]}S_{\Ab^{[j]}}=2^{-2}\zeta(j)S_{\Bb^{[j]}}R^{[j]},
 \end{equation*}
 where $R^{[j]}$ is given by
 \begin{equation*}
     R^{[j]}= \left(\begin{array}{cc} 0 & \delta_j \Delta \\ \Delta & -1-\eta(j)\Delta \end{array} \right),
 \end{equation*}
 with $\zeta,\eta$ from \cite[Proposition 5.8 (ii)]{jeong19}:
 \begin{align*}
     \zeta(j) = \frac{2}{e^{\lambda 2^{-j-1}}+1}, \quad
     \eta(j) = \frac{e^{\lambda 2^{-j}}-1-\lambda 2^{-j}}{\lambda 2^{-j}(e^{\lambda 2^{-j}}-1)}
 \end{align*}
 and
 \begin{equation*}
          (\delta_j \cb)(\alpha) = e^{-\lambda 2^{-j}}\cb(\alpha +1)-\cb(\alpha), \quad \cb \in \ell^{2}(\ZZ).
 \end{equation*}
 Furthermore, with Definition \ref{def:augmented}, \eqref{eq:coeff_int1} and \eqref{eq:coeff_0}, we obtain
 \begin{equation}\label{eq:level_dep_limit}
     \lim_{j \to \infty}R^{[j]} = \left(\begin{array}{cc} 0 & \Delta^2 \\ \Delta & -1-2^{-1}\Delta \end{array}\right)=\mathcal{G}^{[2]}=
     \left(\begin{array}{cc} 0 & 1 \\ 1 & 0\end{array}\right)
     \widetilde{T}_1^2.
 \end{equation}
 The transformation $\left(\begin{array}{cc} 0 & 1 \\ 1 & 0\end{array}\right)$ and the Gregory operator $\mathcal{G}^{[2]}$ (cf. \cite{moosmueller18b}) appear for the same reason as in Remark \ref{rem:greg_operator}.
 
Eq.\ \eqref{eq:level_dep_limit} implies that factorizing level-dependent schemes of dimension $d=1$ reproducing $\{1,x,e^{\lambda x}\}$ is connected to factorizing stationary schemes of the same dimension reproducing $\{1,x,x^2\}$ via limits. 
The level-dependent factorizations of \cite{jeong19} thus depend on $S_{\Ab}$ satisfying a type of overreproduction, in contrary to the factorizations of \cite{cotronei18}.

Through this overreproduction, the connection to the augmented Taylor operator is not surprising, considering that the cancellation operator for level-dependent Hermite schemes reproducing exponentials of \cite{conti16} converges to the Taylor operator, cf.\ \cite[Corollary 2]{conti16}. This also indicates that a generalization of \cite{jeong19} to $d>1$ and multiple exponentials, has to be an operator which converges to $\widetilde{T}^j_d$. 
\end{remark}

A generalization of the spectral condition \eqref{eq:SpecCond} to so-called \emph{spectral chains} is proposed in \cite{MerrienSauer18S}. We mention two special spectral chain for which the augmented Taylor operator can be computed easily. Consider a subdivision operator $S_{\Ab}$ with spectral chain
\begin{equation}\label{eq:new_spec_poly}
\vb(p_k) =
\begin{pmatrix}
  \Delta^j p_k : j=0,\dots,d
\end{pmatrix}^T
, \quad k=0,\ldots n.
\end{equation}
This implies that $S_{\Ab}$ satisfies \eqref{eq:SpecCond} with \eqref{eq:new_spec_poly}. In this case $S_{\Ab}$ factorizes with respect to a \emph{complete Taylor operator} of the form
$$
\begin{pmatrix}
  \Delta & -1 \\
  & \ddots & \ddots \\
  & & \Delta & -1 \\
  & & & \Delta
\end{pmatrix},
$$
cf.\ \cite{MerrienSauer18S}.
Applying the augmented Taylor construction, analogous to Theorem \ref{thm:factorization}, we obtain that $S_{\Ab}$ factorizes with respect to the operators
$$
\begin{pmatrix}
  \Delta & -1 & &\\
  & \ddots & \ddots &\\
  & & \Delta & -1 \\
  & & & \Delta^{j+1-d}
\end{pmatrix}, \quad j=d,\ldots,n.
$$
Note that in this case all vectors $\yb$ are zero.

We also consider the following spectral chain which is connected to B-Splines:
\begin{equation}\label{eq:new_spec_poly2}
\vb(p_k) =
\begin{pmatrix}
  \Delta^j p_k(\cdot - j) : j=0,\dots,d
\end{pmatrix}^T
, \quad k=0,\ldots n,
\end{equation}
see \cite{MerrienSauer18S}. In \cite{MerrienSauer18S} it is proved that a subdivision operator $S_{\Ab}$ with spectral chain \eqref{eq:new_spec_poly2} factorizes with respect to the generalized Taylor operator
$$
\begin{pmatrix}
  \Delta & -1  &  \cdots & -1\\
  & \ddots & \ddots &  \vdots \\
  & & \Delta & -1 \\
  & & & \Delta
\end{pmatrix}.
$$
With the augmented Taylor construction we obtain that $S_{\Ab}$ factorizes with respect to
$$
\begin{pmatrix}
  \Delta & -1  &  \cdots & -1 & -1-\Delta\\
  & \ddots & \ddots &  \vdots &\vdots \\
  & &  \Delta &  -1 &-1 -\Delta \\
  & & & \Delta &-1 -\Delta \\
  & & & & \Delta^{j+1-d}
\end{pmatrix}, \quad j=d+1,\ldots,n.
$$
Note that in this case $\yb_0=\left(1.\ldots,1,0\right)^T$ and $\yb_j=\mathbf{0}, j>0$.





\section{Interpretation of the augmented Taylor operator}
The coefficients $G_n^k$ appear in the following approximations for integrating functions $f$ (see \cite{phillips72,salzer47}):
\begin{align}\label{integral_coeff}
    \int_{x_0}^{x_1}\int_{x_0}^{x_2}\cdots\int^{x_{k}}_{x_0}f(x)dxdx_{k}\cdots dx_2
    =(x_1-x_0)^k\sum_{n=0}^m G_n^k\Delta^{n}f(x_0) +R_m^kf(x_1;x_0),
\end{align}
where $R_m^kf(x_0;x_1)$ denotes the remainder term.
Via \eqref{integral_coeff} we derive an interpretation of the augmented Taylor operator $\widetilde T _d^n$ (Theorem \ref{thm:interpret_augm_taylor}).

Let $f\in C^d(\RR)$ and denote by $\cT_n f(x_1;x_0)$ its $n$-th Taylor polynomial, i.e.\ 
\begin{equation*}
    \cT_n f(x_1;x_0)=\sum_{k=0}^n\frac{f^{(k)}(x_0)}{k!}(x_1-x_0)^k, \quad n=0,\ldots, d.
\end{equation*}
In analogy we define
\begin{equation}\label{eq:I}
    \cI^k_nf(x_1;x_0):=\sum_{m=0}^nG_m^k\Delta^{m}f(x_0)(x_1-x_0)^k.
\end{equation}
Thus \eqref{integral_coeff} becomes
\begin{equation*}
     \int_{x_0}^{x_1}\int_{x_0}^{x_2} \cdots \int_{x_0}^{x_k} f(x)dxdx_k\cdots dx_2=\cI^k_nf(x_1;x_0)+R_n^kf(x_1,x_0).
\end{equation*}
It is easy to see that
\begin{equation*}
    \int_{x_0}^{x_1}\int_{x_0}^{x_2} \cdots \int_{x_0}^{x_{d-j}} f^{(d)}(x)dxdx_{d-j}\cdots dx_2
    =f^{(j)}(x_1)-\cT_{d-j-1}f^{(j)}(x_1;x_0),  
\end{equation*}
for $j=0,\ldots,d-1.$
Thus we get
\begin{equation}\label{eq:IT}
    \cI_n^kf^{(d)}(x_1;x_0)=f^{(d-k)}(x_1)-\cT_{k-1}f^{(d-k)}(x_1;x_0)-R_n^kf^{(d)}(x_1,x_0).
\end{equation}
From \cite{merrienSauer12:_from_hermit} we know
\begin{equation*}
    \left(\widetilde{T}_d\vb(f)(x)\right)_j=f^{(j)}(x + 1)-\cT_{d-j}f^{(j)}(x+1; x),  \quad j=0,\ldots,d-1,
\end{equation*}
i.e.\ the remainder term, when Taylor expanding $f^{(j)}(x+1)$ at $x$ with order $d-j$.
Now consider the augmented Taylor operator in view of \eqref{eq:I} and \eqref{eq:IT}:
\begin{align*}
    \left(\widetilde{T}_d^n\vb(f)(x)\right)_j&=
    f^{(j)}(x+1)-\cT_{d-j-1}f^{(j)}(x+1;x)-\sum_{k=0}^{n-d}G_k^{d-j}\Delta^kf^{(d)}\\
    &=f^{(j)}(x+1)-\cT_{d-j-1}f^{(j)}(x+1;x)-\cI^{d-j}_{n-d}f^{(d)}(x+1;x)\\
    &=R^{d-j}_{n-d}f^{(d)}(x+1;x),
\end{align*}
that is, the remainder term, when integrating $f^{(d)}, (d-j)$-times with precision $n-d$.
We summarize this result in the following theorem.
\begin{theorem}\label{thm:interpret_augm_taylor}
Let $f \in C^d(\RR)$. Then
\begin{equation*}
    \widetilde{T}_d^n \vb(f)(x) =
    \widetilde{T}_d^n
    \begin{pmatrix}
      f(x) \\
      f'(x) \\
      \vdots \\
      f^{(d)}(x)
    \end{pmatrix}    
    =
    \begin{pmatrix}
      R^{d}_{n-d}f^{(d)}(x+1;x) \\[1em]
      R^{d-1}_{n-d}f^{(d)}(x+1;x) \\[1em]
      \vdots \\[1em]
      R^{0}_{n-d}f^{(d)}(x+1;x)
    \end{pmatrix},
    \quad x\in \RR,
\end{equation*}
with the remainder terms $R_d^{d-j}f, j=0,\ldots,d$, given in \eqref{integral_coeff}.
\end{theorem}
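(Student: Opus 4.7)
The plan is to verify the identity component-by-component by computing $\widetilde{T}_d^n \vb(f)(x)$ row by row directly from Definition \ref{def:augmented} and then matching each entry with the corresponding remainder in \eqref{integral_coeff}. The necessary ingredients are already in place: the shorthand \eqref{eq:I}, the consequence \eqref{eq:IT} of the Taylor formula, and the approximation identity \eqref{integral_coeff} itself.

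First, I would fix $0 \leq j \leq d-1$ and read off the $j$-th row of $\widetilde{T}_d^n$, obtaining
\begin{equation*}
\bigl(\widetilde{T}_d^n\vb(f)(x)\bigr)_j = \Delta f^{(j)}(x) - \sum_{i=1}^{d-1-j} \frac{f^{(j+i)}(x)}{i!} - \sum_{k=0}^{n-d} G_k^{d-j}\Delta^k f^{(d)}(x).
\end{equation*}
Expanding $\Delta f^{(j)}(x) = f^{(j)}(x+1) - f^{(j)}(x)$ and absorbing the $-f^{(j)}(x)$ summand into the middle sum (the $i=0$ term) reveals the truncated Taylor polynomial, so the expression becomes
\begin{equation*}
f^{(j)}(x+1) - \cT_{d-1-j}f^{(j)}(x+1;x) - \sum_{k=0}^{n-d} G_k^{d-j}\Delta^k f^{(d)}(x).
\end{equation*}

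Next, since $(x+1-x)^{d-j} = 1$, Definition \eqref{eq:I} identifies the trailing sum as $\cI^{d-j}_{n-d}f^{(d)}(x+1;x)$. Substituting the identity \eqref{eq:IT} with $k = d-j$ then cancels the Taylor polynomial piece and leaves exactly $R^{d-j}_{n-d}f^{(d)}(x+1;x)$, which is the claimed $j$-th component. Finally, for the last row $j=d$, the operator yields $\Delta^{n+1-d}f^{(d)}(x)$, which is precisely $R^{0}_{n-d}f^{(d)}(x+1;x)$ under the natural convention for the $k=0$ case of \eqref{integral_coeff} (no integration takes place, and the \emph{remainder} reduces to a pure $(n-d+1)$-fold difference).

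The main obstacle is not analytical but notational: one must carefully align the row index $j$ with the integration order $d-j$, keep the Taylor polynomial degree as $d-1-j$ rather than $d-j$, and treat the degenerate bottom row consistently with the convention underlying \eqref{integral_coeff}. Beyond this bookkeeping, no new input beyond \eqref{eq:I}, \eqref{eq:IT}, and \eqref{integral_coeff} is needed, and the argument proceeds in a handful of lines.
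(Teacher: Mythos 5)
Your proposal is correct and follows essentially the same route as the paper's own derivation, which appears immediately before the theorem: expand the $j$-th row, recognize the truncated Taylor polynomial $\cT_{d-j-1}f^{(j)}(x+1;x)$, identify the trailing sum with $\cI^{d-j}_{n-d}f^{(d)}(x+1;x)$ via \eqref{eq:I}, and conclude with \eqref{eq:IT}. Your explicit treatment of the degenerate bottom row $j=d$ is a small addition the paper leaves implicit, but it does not change the argument.
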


\bibliographystyle{plainnat}


\providecommand{\bysame}{\leavevmode\hbox to3em{\hrulefill}\thinspace}
\providecommand{\MR}{\relax\ifhmode\unskip\space\fi MR }
\providecommand{\MRhref}[2]{%
  \href{http://www.ams.org/mathscinet-getitem?mr=#1}{#2}
}
\providecommand{\href}[2]{#2}

\end{document}